
\documentclass{article}
\normalsize

%
\usepackage[cmex10]{amsmath}
\usepackage{amsfonts}
\usepackage{amsthm}

\newtheorem{defi}{Definition}

\newtheorem{thm}[defi]{Theorem}
\newtheorem{lem}[defi]{Lemma}
\newtheorem{cor}[defi]{Corollary}

\hyphenation{op-tical net-works semi-conduc-tor}

%
\title{A family of  Alltop functions that are EA-inequivalent to the cubic function}

\begin{document}

%
%

%
%
%

\author{Joanne L. Hall\thanks{This research was done while J. L. Hall was with the Department of Algebra, Charles University Prague, Czech Republic. J. L. Hall is currently with the Queensland University of Technology.},~
        Asha Rao
\thanks{A. Rao is with RMIT University, Melbourne, Australia.  e-mail: asha@rmit.edu.au
},~
        and~Stephen~M.~Gagola~III\thanks{ S. M. Gagola III is with the Department
of Algebra, Charles University, Prague, Czech Republic.}
}

%



\maketitle

\begin{abstract}
Sequences with optimal correlation properties are much sought after for applications in communication systems.  In 1980,  Alltop (\emph{IEEE Trans. Inf. Theory} 26(3):350-354, 1980) described a set of sequences based on a cubic function and showed that these sequences were optimal with respect to the known bounds on auto and crosscorrelation.  Subsequently these sequences were used to construct mutually unbiased bases (MUBs), a structure of importance in quantum information theory.  The key feature of this cubic function is that its difference function is a planar function.  Functions with planar difference functions have been called \emph{Alltop functions}.  This paper provides a new family of Alltop functions and  establishes the use of Alltop functions for construction of sequence sets and MUBs.
\end{abstract}


keywords:
Planar function, Alltop function, mutually unbiased bases, MUBs, CDMA.

%

\section{Introduction}
%
%
%
%
Sequences with optimal correlation properties are much sought after for applications in communication systems. The root mean square (rms) correlation and the maximum correlation amplitudes of a sequence are the two correlations of interest in the design of code-division multiple access (CDMA) and such other systems \cite{DY2007}. There are known bounds on these correlations -  Welch's bounds \cite{Welch74} and Levenstein's bounds \cite{KabLev78} - and there is ongoing research into the design of sequences that meet these bounds \cite{ZF2011,Schmidt2011}.  For example it is known that no set of $N$ sequences of period $K$ can meet the Welch bound for maximum correlation if $N > K^2$ \cite{DY2007}. Thus, for $N > K^2$, sequences are considered optimal if they meet the Levenstein bounds.

In 1980, Alltop \cite{Alltop80} constructed complex periodic sequences which nearly meet the Welch bound for maximum correlation, using a cubic polynomial over the field $\mathbb{F}_p$ for $p$ prime, $p > 3$. This construction was extended \cite{KR03} to all prime power fields $\mathbb{F}_{p^r}$ for $p > 3$ and used to construct mutually unbiased bases (MUBs), structures of importance in quantum information theory \cite{WF89}. MUBs were originally constructed by Wooters and Fields in 1989 using quadratic functions over all prime power fields, with the construction being generalized to using planar functions \cite{RS07, DY2007} in 2007. In addition, Ding and Yin \cite{DY2007} also show that the associated complex periodic sequences meet the Levenstein bound for maximum correlation for $N > K^2$.

The difference function of the Alltop (cubic) polynomial is a planar function and consequently, functions whose difference functions are planar were called `Alltop functions' \cite{HRD12}. Coincidentally, the MUBs constructed from the cubic Alltop polynomial (\cite{KR03}) are equivalent to the set of MUBs constructed using a quadratic function, which is planar \cite{GR09}. The question that arises is whether there exist Alltop polynomials other than a cubic polynomial. Initial searches for Alltop functions have shown that several classes of planar functions are not the difference functions of any polynomial~\cite{HRD12}. In this paper we extend those results and give a family of Alltop functions whose difference functions are EA-equivalent to $x^2$, but which are themselves EA-inequivalent to $x^3$.

The paper is organized in the following manner: Section \ref{sec:prop} describes some of the properties of Alltop polynomials, highlighting some analogous properties to planar polynomials.  In Section~\ref{sec:new}, a family of Alltop functions is constructed. These Alltop functions are shown to be EA-inequivalent to $x^3$ even though their difference functions are EA-equivalent to $x^2$.

In Section \ref{sec:mubs} it is shown that any Alltop function also generates a complete set of MUBs while in Section \ref{sec:signals} it is shown that any Alltop function generates sets of sequences which are optimal with respect to the Levenstein  bound for maximum correlations.

\section{Properties of Alltop polynomials \label{sec:prop}}

Let $\mathbb{F}_{p^r}$ be a field of characteristic $p$. A function $f : \mathbb{F}_{p^r} \rightarrow \mathbb{F}_{p^r}$ is  called a \emph{planar
function} if for every  $a \in \mathbb{F}_{p^r}^*$ the function
$\Delta_{f,a}: x \mapsto f(a + x)- f (x)$ is a bijection \cite{CM97D}.  Planar functions do not exists when $p=2$ \cite{RS1989}. A function $A : \mathbb{F}_{p^r} \rightarrow \mathbb{F}_{p^r}$ is called an \emph{Alltop function} if $\Delta_{A,a}(x)$ is a planar function for all $a\in\mathbb{F}_{p^r}^*$ \cite{HRD12}. Here, as shown in \cite[Thm 15]{HRD12}, $p$ cannot equal 3, and hence $p \geq 5$.

Planar functions have been used to construct mutually unbiased bases \cite{KR03, RS07}  and sequences with low autocorrelation \cite{DY2007}.  The feature of planar functions that is used in these applications is the magnitude of the character sum.

\begin{thm}\cite[Thm 2.3]{CM97A}\label{thm:planarsum}
Let $\Pi(x)$ be a planar function on $\mathbb{F}_{p^r}$, $\omega=e^{2i\pi /p}$, and $\chi(x)=\omega^{ \textrm{ \em tr}(x)}$, then
\begin{equation}
\left|\sum_{x\in \mathbb{F}_{p^r}}\chi(a\Pi(x)+bx)\right|=\sqrt{p^r}
\end{equation}
for any $b\in {\mathbb{F}}_{p^r}$ and $a\in {\mathbb{F}}_{p^r}^*$.
\end{thm}

Another useful property of planar functions is that the planarity of the function is preserved when composed with a linearized (also called additive) polynomial \cite{CM97D}.

A function $L(x)\in\mathbb{F}_{p^r}[x]$ is called \emph{additive} if $L(x)+L(y)=L(x+y)$ for all $x,y\in\mathbb{F}_{p^r}$.   All additive functions on $\mathbb{F}_{p^r}$  have the shape $L(x)=\sum_{k=0}^{r-1} a_kx^{p^k}$, $a_k\in\mathbb{F}_{p^r}$ \cite{CM97D}.    Additive polynomials have a useful property with relation to difference functions:
\begin{lem}\cite[Lem 2.2]{CM97D}\label{lem:planaradd}
If $f, L\in\mathbb{F}_{p^r}[x]$ with $L(x)$ an additive polynomial, and $a\in\mathbb{F}_{p^r}^*$, then
\begin{equation}
\Delta_{f,L(a)}(L(x))=\Delta_{f\circ L,a}(x)
\end{equation}
\end{lem}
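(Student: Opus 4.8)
The plan is to prove the identity by unwinding the definition of the difference operator and then invoking the additivity of $L$. Recall that for any $g\in\mathbb{F}_{p^r}[x]$ and any shift $c$ one has $\Delta_{g,c}(x)=g(c+x)-g(x)$. So the entire statement is a short formal manipulation, and no deep ingredient is needed.

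First I would expand the left-hand side by substituting $L(x)$ for the argument and $L(a)$ for the shift:
\begin{equation}
\Delta_{f,L(a)}(L(x)) = f\bigl(L(a)+L(x)\bigr)-f\bigl(L(x)\bigr).
\end{equation}
The only real step is to rewrite $L(a)+L(x)$ as $L(a+x)$, which is precisely the additivity hypothesis $L(u)+L(v)=L(u+v)$ applied with $u=a$ and $v=x$. Since additivity is assumed for all pairs of field elements, this requires no case analysis. After the substitution the expression becomes
\begin{equation}
f\bigl(L(a+x)\bigr)-f\bigl(L(x)\bigr) = (f\circ L)(a+x)-(f\circ L)(x),
\end{equation}
which is exactly $\Delta_{f\circ L,a}(x)$ by definition, completing the argument. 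Note that $a$ enters only through $a\in\mathbb{F}_{p^r}^*$, matching the convention used for $\Delta$ in the definitions of planar and Alltop functions, and no further restriction on $a$ is needed.

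I do not anticipate any genuine obstacle here: the content of the lemma is entirely the interchange $L(a)+L(x)=L(a+x)$, and everything else is bookkeeping with the definition of $\Delta$. The one point worth a remark is that $\Delta_{f,L(a)}$ is written with $L(a)$ in the shift slot even though $L(a)$ need not be nonzero; the identity above remains valid in the degenerate case $L(a)=0$, where both sides reduce to the zero function, so this causes no difficulty. When the lemma is subsequently combined with the fact that planarity is preserved under composition with additive polynomials, one simply applies it with $a$ chosen so that $L(a)\neq 0$.
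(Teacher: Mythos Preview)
Your proof is correct; the identity really is just the definition of $\Delta$ combined with $L(a)+L(x)=L(a+x)$. The paper itself does not supply a proof of this lemma---it is quoted directly from \cite[Lem~2.2]{CM97D}---so your argument is exactly the short formal verification one would expect.
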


A simple calculation \cite{CM97D} shows that if $\Pi(x)$ is a planar polynomial, $L(x)$ an additive polynomial and $c\in\mathbb{F}_{p^r}$, then
\begin{equation}\label{eqn:planarcalc}
\Pi'(x)=\Pi(x)+L(x)+c
\end{equation} is also a planar polynomial.

 A Dembowski-Ostrom polynomial \cite{CM97D} is a polynomial $f(x)\in\mathbb{F}_{p^r}[x]$ with the shape
\[f(x)=\sum_{i,j=0}^{r-1}a_{ij}x^{p^i+p^j} \quad\quad\text{with}\;a_{ij}\in\mathbb{F}_{p^r}.\]
Most known planar functions are Dembowski Ostrom polynomials.

The Alltop analogue to equation (\ref{eqn:planarcalc}) is given below.

\begin{lem}\label{lem:EAalltop}
If $A(x)$ is an Alltop function, $\Pi(x)$ a Dembowski-Ostrom  planar function, $L(x)$ an additive function and $c\in\mathbb{F}_{p^r}$ then
\begin{equation}
A'(x)=A(x) + \Pi(x)+L(x)+c
\end{equation} is also an Alltop function.
\end{lem}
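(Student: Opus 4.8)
The plan is to compute the difference function $\Delta_{A',a}$ directly, fix $a\in\mathbb{F}_{p^r}^*$, and recognise $\Delta_{A',a}$ as the planar function $\Delta_{A,a}$ modified only by an additive polynomial and an additive constant; the planarity is then immediate from the calculation recorded in equation~(\ref{eqn:planarcalc}). First I would expand
\begin{equation*}
\Delta_{A',a}(x)=A'(x+a)-A'(x)=\Delta_{A,a}(x)+\Delta_{\Pi,a}(x)+\Delta_{L,a}(x),
\end{equation*}
where the constant $c$ cancels. The term $\Delta_{A,a}(x)$ is planar because $A$ is an Alltop function, and since $L$ is additive, $\Delta_{L,a}(x)=L(x+a)-L(x)=L(a)$ is a constant.

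The substantive step is to show that $\Delta_{\Pi,a}$ is an additive polynomial plus a constant. Writing $\Pi(x)=\sum_{i,j}a_{ij}x^{p^i+p^j}$ and using the Frobenius identity $(x+a)^{p^k}=x^{p^k}+a^{p^k}$, one obtains
\begin{equation*}
(x+a)^{p^i+p^j}-x^{p^i+p^j}=a^{p^j}x^{p^i}+a^{p^i}x^{p^j}+a^{p^i+p^j},
\end{equation*}
so that $\Delta_{\Pi,a}(x)=L_a(x)+\Pi(a)$, where $L_a(x):=\sum_{i,j}a_{ij}\bigl(a^{p^j}x^{p^i}+a^{p^i}x^{p^j}\bigr)$ is a sum of $p^k$-th powers of $x$ with coefficients in $\mathbb{F}_{p^r}$, hence additive. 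This is precisely the place where the Dembowski--Ostrom hypothesis on $\Pi$ is essential: a general planar $\Pi$ need not have an additive difference function.

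Combining the three pieces,
\begin{equation*}
\Delta_{A',a}(x)=\Delta_{A,a}(x)+L_a(x)+\bigl(\Pi(a)+L(a)\bigr),
\end{equation*}
i.e.\ the planar function $\Delta_{A,a}$ plus the additive polynomial $L_a$ plus a constant. By equation~(\ref{eqn:planarcalc}) such a sum is again a planar polynomial. Hence $\Delta_{A',a}$ is planar for every $a\in\mathbb{F}_{p^r}^*$, which is exactly the assertion that $A'$ is an Alltop function.

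I expect the only non-routine point to be the verification that the difference function of a Dembowski--Ostrom polynomial is additive up to an additive constant; everything else is bookkeeping together with an appeal to the already-noted planar calculation. A minor care point is to confirm that $L_a$ is genuinely additive, which holds because it is an $\mathbb{F}_{p^r}$-linear combination of the additive maps $x\mapsto x^{p^k}$.
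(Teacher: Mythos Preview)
Your proof is correct and follows essentially the same route as the paper: both expand $\Delta_{A',a}$ into the three pieces $\Delta_{A,a}$, $\Delta_{\Pi,a}$, and $\Delta_{L,a}$, identify them respectively as planar, affine, and constant, and invoke equation~(\ref{eqn:planarcalc}). The paper simply asserts that $\Pi(x+a)-\Pi(x)$ is a ``linear permutation'' whereas you explicitly verify additivity via the Dembowski--Ostrom shape, which is a welcome elaboration (and you correctly observe that only additivity, not the permutation property, is needed for equation~(\ref{eqn:planarcalc})).
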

\begin{proof}
For $a\in\mathbb{F}_{p^r}^*$, 
\begin{align}
\Delta_{A',a}(x)=  & A'(x+a)-A'(x),\\
 = & A(x+a)+\Pi(x+a)+L(x+a)+c\nonumber\\
& -A(x)-\Pi(x)-L(x)-c,\\
= &\left(A(x+a)-A(x)\right)+\left(\Pi(x+a)-\Pi(x)\right)\nonumber\\
& +\left(L(x+a)-L(x)\right).
\end{align}
Note that $\left(A(x+a)-A(x)\right)$ is a planar function, $\left(\Pi(x+a)-\Pi(x)\right)$ is a linear permutation, and $\left(L(x+a)-L(x)\right)=L(a)$ is a constant.  Hence from equation \ref{eqn:planarcalc}, $\Delta_{A',a}(x)$ is a planar function for every  $a\in\mathbb{F}_{p^r}^*$, implying that $A'(x)$ is an Alltop function.

\end{proof}

\begin{lem} \cite[Lem 2.3]{CM97D}\label{lem:planarfacts}
Let $L(x)$ be an additive polynomial, then
the following are equivalent
\begin{enumerate}
\item[(i)]
$\Pi(L(x))$ is a planar polynomial.
\item[(ii)]
$L(\Pi(x))$ is a planar polynomial.
\item[(iii)]
$\Pi(x)$ is a planar polynomial and $L(x)$ is a permutation polynomial.
\end{enumerate}
\end{lem}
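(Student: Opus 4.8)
The plan is to prove the three implications (i)$\Rightarrow$(iii), (iii)$\Rightarrow$(ii), and (ii)$\Rightarrow$(i), since the statement (iii) is the convenient pivot: it separates the hypothesis into a condition on $\Pi$ alone and a condition on $L$ alone. The main tool throughout is Lemma \ref{lem:planaradd}, which for an additive $L$ reads $\Delta_{f,L(a)}(L(x)) = \Delta_{f\circ L,a}(x)$, together with the observation that a composition of a bijection with a bijection is a bijection, while a composition involving a non-surjective map cannot be a bijection.

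First I would prove (iii)$\Rightarrow$(ii). Assume $\Pi$ is planar and $L$ is a permutation. Fix $a \in \mathbb{F}_{p^r}^*$ and consider $\Delta_{L\circ\Pi,a}(x) = L(\Pi(x+a)) - L(\Pi(x)) = L\big(\Pi(x+a)-\Pi(x)\big) = L\big(\Delta_{\Pi,a}(x)\big)$, using additivity of $L$. Since $\Pi$ is planar, $\Delta_{\Pi,a}$ is a bijection, and since $L$ is a permutation, the composite $L\circ\Delta_{\Pi,a}$ is a bijection; hence $L(\Pi(x))$ is planar. Next, (ii)$\Rightarrow$(i): suppose $L(\Pi(x))$ is planar. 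I would first argue $L$ must be a permutation — if $L$ were not injective, then since $L$ is additive its kernel is a nontrivial subspace, and one checks that $\Delta_{L\circ\Pi,a}$ then fails to be surjective for suitable $a$ (its image lies in a coset of $\mathrm{im}\,L$, a proper subspace) — so $L$ is bijective, and then $\Delta_{\Pi,a} = L^{-1}\circ \Delta_{L\circ\Pi,a}$ is a bijection for each $a$, so $\Pi$ is planar; combining, (iii) holds, and since I will also have shown (i) is equivalent to (iii) this direction follows. So it remains to link (i) with (iii): for (i)$\Rightarrow$(iii), write $a = L(b)$ whenever $b \in \mathrm{im}\,L$ and apply Lemma \ref{lem:planaradd} to get $\Delta_{\Pi,L(b)}(L(x)) = \Delta_{\Pi\circ L,b}(x)$; planarity of $\Pi\circ L$ makes the right side a bijection in $x$, which forces both that $L$ is injective (hence a permutation, being an $\mathbb{F}_p$-linear endomorphism of a finite space) and, after substituting $y = L(x)$ and letting $b$ range so that $L(b)$ covers all of $\mathbb{F}_{p^r}^*$, that $\Delta_{\Pi,a}$ is a bijection for every $a \neq 0$, i.e.\ $\Pi$ is planar. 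Finally (iii)$\Rightarrow$(i) is the mirror of (iii)$\Rightarrow$(ii): $\Delta_{\Pi\circ L,a}(x) = \Delta_{\Pi,L(a)}(L(x))$ by Lemma \ref{lem:planaradd}, and as $x$ ranges over $\mathbb{F}_{p^r}$ so does $L(x)$ (since $L$ is a permutation), while $L(a)\neq 0$, so the map is a bijection.

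The step I expect to be the main obstacle is the ``only if'' direction on $L$ — namely extracting from planarity of $\Pi\circ L$ (or of $L\circ\Pi$) the conclusion that $L$ is a permutation rather than assuming it. The clean way is to note that if $L$ is additive but not injective then $\mathrm{im}\,L$ is a proper $\mathbb{F}_p$-subspace $V \subsetneq \mathbb{F}_{p^r}$; in the case $L\circ\Pi$, every difference $\Delta_{L\circ\Pi,a}(x)$ takes values in $V$, so it cannot be surjective onto $\mathbb{F}_{p^r}$, contradicting planarity; in the case $\Pi\circ L$, one uses that $L$ not injective means $L$ is not surjective (finite domain), so pick $a\in \mathbb{F}_{p^r}^*$ and note $\Delta_{\Pi\circ L,a}(x) = \Delta_{\Pi,L(a)}(L(x))$ only sees inputs in $\mathrm{im}\,L$, and a short counting argument (the left side would have to be injective on all of $\mathbb{F}_{p^r}$ but factors through the non-injective $L$) gives the contradiction. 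Everything else is a routine application of Lemma \ref{lem:planaradd} and additivity; I would present it compactly as the cycle (i)$\Leftrightarrow$(iii) and (ii)$\Leftrightarrow$(iii).
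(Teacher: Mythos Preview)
The paper does not actually supply a proof of Lemma~\ref{lem:planarfacts}; it is quoted verbatim from \cite[Lem~2.3]{CM97D}. The closest thing in the paper to a proof is the argument for the Alltop analogue, Lemma~\ref{lem:alltopfact}, which the authors say ``follows closely the proof of Lemma~\ref{lem:planarfacts} as published in \cite[Lem~2.3]{CM97D}.'' That proof is organised exactly as yours is in practice: one shows (ii)$\Leftrightarrow$(iii) via the identity $\Delta_{L\circ\Pi,a}=L\circ\Delta_{\Pi,a}$, and (i)$\Leftrightarrow$(iii) via Lemma~\ref{lem:planaradd}. Your argument is correct and matches this structure; the only extra work you do (and must do, since you cannot invoke Lemma~\ref{lem:planarfacts} as a black box the way the Alltop proof does) is the direct verification that a non-injective additive $L$ kills planarity, which you handle by the image/kernel subspace argument.

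One cosmetic point: your opening sentence announces the cycle (i)$\Rightarrow$(iii)$\Rightarrow$(ii)$\Rightarrow$(i), but what you actually write is the pair of equivalences (i)$\Leftrightarrow$(iii) and (ii)$\Leftrightarrow$(iii). The latter is cleaner and is what the cited proof does; just align the roadmap with the execution. Also, in (i)$\Rightarrow$(iii) the quickest way to force $L$ to be a permutation is to note that if $L(b)=0$ for some $b\neq 0$ then $\Delta_{\Pi\circ L,b}=\Delta_{\Pi,0}\circ L\equiv 0$, which is not a bijection---this is slightly more direct than the factoring argument you sketch, though both are fine.
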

An additive polynomial, $L(x)$, is  a permutation polynomial over $\mathbb{F}_{p^r}$ if and only if $L(x)$ has no nonzero roots in $\mathbb{F}_{p^r}$ \cite[Thm 7.9]{LN97}.  An analogous result to Lemma \ref{lem:planarfacts} holds for Alltop functions:

\begin{lem}\label{lem:alltopfact}
Let $L(x)$ be an additive polynomial, then the following are equivalent.
\begin{enumerate}
\item[(i)]
$A(L(x))$ is an Alltop polynomial.
\item[(ii)]
$L(A(x))$ is an Alltop polynomial.
\item[(iii)]
$A(x)$ is an Alltop polynomial and $L(x)$ is a permutation polynomial.
\end{enumerate}
\end{lem}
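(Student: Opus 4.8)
The plan is to prove the three equivalences by combining Lemma \ref{lem:planaradd} (the additive-polynomial shift identity) with the established characterization of planarity for compositions, Lemma \ref{lem:planarfacts}, and the fact that an additive polynomial permutes $\mathbb{F}_{p^r}$ iff it has no nonzero root. The cleanest route is a cycle of implications (i)$\Rightarrow$(iii)$\Rightarrow$(ii)$\Rightarrow$(i), or, if that proves awkward, to establish (iii)$\Leftrightarrow$(i) and (iii)$\Leftrightarrow$(ii) separately; either way the engine is the same computation on difference functions.

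First I would unwind the definitions. Fix $a\in\mathbb{F}_{p^r}^*$. For the composition $A\circ L$, Lemma \ref{lem:planaradd} gives $\Delta_{A\circ L,a}(x)=\Delta_{A,L(a)}(L(x))$, so $A\circ L$ is an Alltop polynomial precisely when, for every $a\in\mathbb{F}_{p^r}^*$, the map $x\mapsto\Delta_{A,L(a)}(L(x))$ is planar. By Lemma \ref{lem:planarfacts}, that composed map is planar iff $\Delta_{A,L(a)}$ is planar and $L$ is a permutation polynomial. If $L$ is a permutation then $L(a)$ ranges over all of $\mathbb{F}_{p^r}^*$ as $a$ does, and the condition ``$\Delta_{A,b}$ planar for all $b\in\mathbb{F}_{p^r}^*$'' is exactly the statement that $A$ is Alltop; conversely if $A\circ L$ is Alltop then applying Lemma \ref{lem:planarfacts} to a single nonzero $a$ already forces $L$ to be a permutation, and then the $\Delta_{A,L(a)}$ range over all nonzero shifts. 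This gives (i)$\Leftrightarrow$(iii). For (ii), I would first note that since $L$ additive implies $\Delta_{L\circ A,a}(x)=L(A(x+a))-L(A(x))=L\bigl(A(x+a)-A(x)\bigr)=L\bigl(\Delta_{A,a}(x)\bigr)$, so $L\circ A$ is Alltop iff $L\circ\Delta_{A,a}$ is planar for all $a\in\mathbb{F}_{p^r}^*$, and by Lemma \ref{lem:planarfacts} this holds iff each $\Delta_{A,a}$ is planar (i.e.\ $A$ is Alltop) and $L$ is a permutation polynomial. That is (ii)$\Leftrightarrow$(iii), completing the cycle.

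One subtlety worth handling carefully is the direction that extracts ``$L$ is a permutation'' and ``$A$ is Alltop'' simultaneously from ``$A\circ L$ Alltop'': one must make sure that using Lemma \ref{lem:planarfacts} with the particular inner function $\Delta_{A,L(a)}$ is legitimate — it is, since Lemma \ref{lem:planarfacts} is stated for an arbitrary polynomial $\Pi$ and an arbitrary additive $L$, and $\Delta_{A,L(a)}$ is just some polynomial. A second minor point: when $L$ is a permutation, $L(a)\neq 0$ for $a\neq 0$, so no degenerate shift creeps in, and the equivalence of the quantifier over $a$ versus over $b=L(a)$ is genuinely a bijection of index sets. These are the places where the argument could silently go wrong, so I would spell them out; the rest is a direct transcription of the planar-case proof with ``planar'' replaced by ``Alltop'' and Lemma \ref{lem:planaradd} supplying the key shift identity. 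I expect no real obstacle — the main work is just organizing the implications so that each appeal to Lemma \ref{lem:planarfacts} is applied to the right composition and the quantifier over nonzero $a$ is tracked correctly.
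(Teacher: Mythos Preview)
Your proposal is correct and follows essentially the same approach as the paper: both arguments reduce the Alltop property of the compositions to the planarity of the corresponding difference functions via the identities $\Delta_{L\circ A,a}=L\circ\Delta_{A,a}$ (from additivity of $L$) and $\Delta_{A\circ L,a}=\Delta_{A,L(a)}\circ L$ (Lemma~\ref{lem:planaradd}), and then invoke Lemma~\ref{lem:planarfacts}. Your treatment of the quantifier over nonzero $a$ and the observation that $L(a)$ ranges over $\mathbb{F}_{p^r}^*$ once $L$ is a permutation is in fact slightly more explicit than the paper's version.
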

\begin{proof}
Follows closely the proof of Lemma \ref{lem:planarfacts} as published  in \cite[Lem 2.3]{CM97D}.
Let  $a\in\mathbb{F}_{p^r}^*$, then
\begin{align}
\Delta_{L\circ A,a}(x)= & L(A(x+a))-L(A(x))\nonumber\\
 = & L(A(x+a)-A(x))\nonumber\\
 = & L(\Delta_{A,a}(x)).\label{eqn:L}
 \end{align}
If $L(A(x))$ is an Alltop polynomial, then $\Delta_{L\circ A,a}(x)$ is a planar polynomial, and hence from equation (\ref{eqn:L}),  $L(\Delta_{A,a}(x))$ is a planar polynomial.  From Lemma \ref{lem:planarfacts}, $L(x)$ is a permutation polynomial and $\Delta_{A,a}(x)$ is a planar polynomial, and hence $A(x)$ is an Alltop polynomial.  Thus \emph{(ii)}$\Rightarrow$\emph{(iii)}.  Conversely if \emph{(iii)} holds then equation (\ref{eqn:L}) shows that $L(A(x))$ is an Alltop polynomial, and hence \emph{(iii)}$\Rightarrow$\emph{(ii)}.

To show that \emph{(i)}$\Rightarrow$\emph{(iii)} consider $A(L(x))$, and suppose that $\Delta_{A\circ L,a}(x)$ is a planar polynomial for all $a\in\mathbb{F}_{p^r}^*$.  Using Lemma \ref{lem:planaradd} $\Delta_{A,L(a)}(L(x))$ is also a planar polynomial. Since $L(x)$ is an additive polynomial, from  Lemma \ref{lem:planarfacts},  $\Delta_{A\circ L,a}(x)$ is a planar polynomial and $L(x)$ is a permutation polynomial.  This holds for all $a\in\mathbb{F}_{p^r}^*$, thus $A(x)$ is an Alltop polynomial.
  The converse result, \emph{(iii)}$\Rightarrow$\emph{(i)}, follows immediately from Lemma \ref{lem:planaradd}.
\end{proof}

Lemma \ref{lem:planarfacts} leads to the definition of extended affine  equivalence for planar functions.  An \emph{affine} function is an additive function plus a constant. Two functions $f_1,f_2$ are \emph{extended affine} (EA) equivalent if there exist affine functions $l_1,l_2,l_3$ such that $f_1(x)=l_1\circ f_2\circ l_2(x) +l_3(x)$ where $l_1,l_2$ are permutations.  Using geometric arguments Coulter and Matthews \cite[\S 5]{CM97D} show that any function EA-equivalent to a planar function, is a planar function, and thus the set of planar functions may be partitioned into EA-equivalence classes.  The following is the analogue for Alltop functions.
\begin{lem}
Let $l_1,l_2,l_3$ be affine functions with $l_1(x)$ and $l_2(x)$ permutations, and $d(x)$ a Dembowski-Ostrom polynomial.  If $A(x)$ is an Alltop polynomial on $\mathbb{F}_{p^r}$ and
\begin{equation}
A'(x)=l_1\circ A\circ l_2(x)+d(x)+l_3(x)
\end{equation}
 then $A'(x)$ is an Alltop polynomial, and  there exists $b\in\mathbb{F}_{p^{r}}^*$ such that for $a\in\mathbb{F}_{p^{r}}^*,  \Delta_{A,a}(x)$ is EA-equivalent to $\Delta_{A',b}(x)$.
\end{lem}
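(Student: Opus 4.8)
The plan is to derive everything from a single explicit computation of the difference function $\Delta_{A',b}$ and then invoke the fact, recalled above from Coulter and Matthews, that any function EA-equivalent to a planar function is itself planar. Write the three affine maps as $l_i(x)=L_i(x)+c_i$ with $L_i$ additive and $c_i\in\mathbb{F}_{p^r}$. Since a map of a finite field is a permutation precisely when it is injective, $l_1$ and $l_2$ being permutations is equivalent to $L_1$ and $L_2$ being permutations, so $L_1$ and $L_2$ are additive permutation polynomials.

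Fix $b\in\mathbb{F}_{p^r}^*$ and expand $\Delta_{A',b}(x)=A'(x+b)-A'(x)$. Using $l_2(x+b)=l_2(x)+L_2(b)$, the additivity of $L_1$, and $l_3(x+b)-l_3(x)=L_3(b)$, one obtains
\begin{equation}
\Delta_{A',b}(x)=L_1\!\left(\Delta_{A,\,L_2(b)}\big(l_2(x)\big)\right)+\Delta_{d,b}(x)+L_3(b).
\end{equation}
A direct expansion of $d(x+b)-d(x)$ for a Dembowski--Ostrom polynomial $d$ shows that $\Delta_{d,b}$ is an affine function of $x$, so $\ell(x):=\Delta_{d,b}(x)+L_3(b)$ is affine. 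Thus the identity above writes $\Delta_{A',b}$ in the form $L_1\circ\Delta_{A,\,L_2(b)}\circ l_2+\ell$, with $L_1$ an additive permutation, $l_2$ an affine permutation, and $\ell$ affine; in other words $\Delta_{A',b}$ is EA-equivalent to $\Delta_{A,\,L_2(b)}$.

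Both conclusions now follow quickly. Because $L_2$ is a permutation and $b\neq 0$, we have $L_2(b)\neq 0$, hence $\Delta_{A,\,L_2(b)}$ is planar since $A$ is Alltop; as EA-equivalence preserves planarity, $\Delta_{A',b}$ is planar, and since $b\in\mathbb{F}_{p^r}^*$ was arbitrary, $A'$ is an Alltop polynomial. For the equivalence of difference functions, given $a\in\mathbb{F}_{p^r}^*$ set $b:=L_2^{-1}(a)\in\mathbb{F}_{p^r}^*$; then $L_2(b)=a$, and the EA-equivalence just obtained says exactly that $\Delta_{A,a}(x)$ is EA-equivalent to $\Delta_{A',b}(x)$. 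I expect the only real friction to be bookkeeping: carrying the additive parts and constants of $l_1,l_2,l_3$ through the two nested compositions, together with recording the elementary fact that the difference function of a Dembowski--Ostrom polynomial is affine. Beyond that, only the Coulter--Matthews EA-invariance of planarity is needed (which sits within the same circle of ideas as Lemma~\ref{lem:planarfacts} and equation~(\ref{eqn:planarcalc})), so no new idea should be required.
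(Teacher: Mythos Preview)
Your proof is correct and follows essentially the same route as the paper: both write $l_i=L_i+c_i$, compute directly that $\Delta_{A',b}(x)=L_1\big(\Delta_{A,L_2(b)}(l_2(x))\big)+\Delta_{d,b}(x)+L_3(b)$, note that $\Delta_{d,b}$ is affine, and conclude EA-equivalence with the choice $a=L_2(b)$ (equivalently $b=L_2^{-1}(a)$). The only cosmetic difference is that the paper records the two intermediate identities $\Delta_{l_1\circ A,b}=L_1\circ\Delta_{A,b}$ and $\Delta_{A\circ l_2,b}=\Delta_{A,L_2(b)}\circ l_2$ separately before combining them, whereas you collapse this into a single line.
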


\begin{proof}
 Let $l_1(x)=L_1(x)+c_1$ and $l_2(x)=L_2(x)+c_2$ where $L_1(x), L_2(x)$ are  additive functions, and $c_1,c_2\in\mathbb{F}_{p^{r}}$. For $b\in\mathbb{F}_{p^{r}}^*$,
\begin{align}
\Delta_{l_1\circ A,b}(x) = & l_1\circ A(x+b)-l_1\circ A(x)\\
= & L_1\circ A(x+b)+c_1-L_1\circ A(b)-c_1\\
= & L_1\circ \Delta_{A,b}(x)\label{eqn:daa}
\end{align}
which is similar to equation \ref{eqn:L}.
\begin{align}
\Delta_{A\circ l_2,b}(x)= & A\circ l_2(x+b)-A\circ l_2(x)\\
= & A(L_2(x+b)+c_2)-A(L_2(x)+c_2)\\
= & A(L_2(x)+L_2(b)+c_2)-A(L_2(x)+c_2)\\
= & A(l_2(x)+L_2(b))-A(l_2(x))\\
=& \Delta_{A,L_2(b)}(l_2(x))\label{eqn:dala}
\end{align}
Let $g(x)=d(x)+l_3(x)$ then $\Delta_{g,b}(x)$ is an affine function.  Using equations (\ref{eqn:daa}, \ref{eqn:dala}),
\begin{align}
\Delta_{A',b}(x)= & L_1\circ \Delta_{A,L_2(b)}(l_2(x))+\Delta_{g,b}(x).
\end{align}
By taking $a=L_2(b)$ then $a \neq 0$ and   $\Delta_{A,a}(x)$ is EA-equivalent to $\Delta_{A',b}(x)$.
\end{proof}

Note that for $d(x)\neq 0$,  $A(x)$ and  $A'(x)$ are EA-inequivalent yet $\Delta_{A,a}(x)$ and $\Delta_{A',a}(x)$ are EA-equivalent for all $a\in\mathbb{F}_{p^r}^*$ such that  $L_2(a)=a$.  Thus an EA-equivalence class of planar functions may contain several EA-equivalence classes of Alltop functions.

\begin{thm}\cite{KO2011}
 Let $L_1(x),L_2(x)\in\mathbb{F}_{p^r}[x]$ be additive polynomials where $p>2$.  If  $\Pi(x)=L_1(x)L_2(x)$  is a planar polynomial then $L_1(x)$ and $L_2(x)$ are both permutation polynomials.
\end{thm}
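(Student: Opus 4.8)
The plan is to prove the contrapositive flavoured statement directly: assume $\Pi(x) = L_1(x)L_2(x)$ is planar, and deduce that neither $L_1$ nor $L_2$ can have a nonzero root. Recall (stated just before Lemma~\ref{lem:planarfacts}, citing \cite[Thm 7.9]{LN97}) that an additive polynomial over $\mathbb{F}_{p^r}$ is a permutation polynomial precisely when it has no nonzero root, so it suffices to rule out nonzero roots. Suppose for contradiction that, say, $L_1(\alpha) = 0$ for some $\alpha \in \mathbb{F}_{p^r}^*$. First I would compute the difference function $\Delta_{\Pi,\alpha}(x) = \Pi(x+\alpha) - \Pi(x)$ and exploit additivity: since $L_1(x+\alpha) = L_1(x) + L_1(\alpha) = L_1(x)$ and $L_2(x+\alpha) = L_2(x) + L_2(\alpha)$, we get
\begin{align}
\Delta_{\Pi,\alpha}(x) &= L_1(x+\alpha)L_2(x+\alpha) - L_1(x)L_2(x) \\
&= L_1(x)\bigl(L_2(x) + L_2(\alpha)\bigr) - L_1(x)L_2(x) \\
&= L_2(\alpha)\,L_1(x).
\end{align}
Thus $\Delta_{\Pi,\alpha}$ is the additive map $x \mapsto L_2(\alpha) L_1(x)$.

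For $\Pi$ to be planar, $\Delta_{\Pi,\alpha}$ must be a bijection of $\mathbb{F}_{p^r}$. But $x \mapsto L_2(\alpha)L_1(x)$ is a (composition with a scalar, hence) additive map that already kills $\alpha \neq 0$: indeed $L_2(\alpha)L_1(\alpha) = L_2(\alpha)\cdot 0 = 0$, so its kernel is nontrivial and it cannot be injective. (If $L_2(\alpha) = 0$ the map is identically zero, which is even more blatantly non-bijective.) This contradicts planarity of $\Pi$, so $L_1$ has no nonzero root, and by symmetry neither does $L_2$; hence both are permutation polynomials. Strictly one should also observe $\Pi$ is genuinely a polynomial map here and that $p > 2$ is what guarantees planar functions exist at all and that the difference-function characterisation of planarity applies — the hypothesis $p>2$ is used only through this standing assumption, so I would remark on it but not belabour it.

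I do not anticipate a serious obstacle: the whole argument is the one-line observation that a root of $L_1$ forces the difference function $\Delta_{\Pi,\alpha}$ to collapse to a scalar multiple of $L_1$, which inherits that same root and so fails to be bijective. The only point requiring a little care is the bookkeeping with additivity of $L_1,L_2$ and the symmetric roles of the two factors; I would state the computation for $L_1$ and simply say "by the symmetric argument" for $L_2$ rather than repeating it. One could alternatively phrase the conclusion via Lemma~\ref{lem:planarfacts}, but since $L_1 L_2$ is a product rather than a composition, the direct difference-function computation above is cleaner and self-contained.
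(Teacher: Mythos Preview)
Your argument is correct. Note, however, that the paper does not actually prove this theorem: it is quoted verbatim from \cite{KO2011} and stated without proof, serving only as motivation for the Alltop analogue, Theorem~\ref{thm:alltopadditive}. So there is no ``paper's own proof'' to compare against here.

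That said, your approach is exactly the one the authors use for the analogous Alltop statement. In the proof of Theorem~\ref{thm:alltopadditive} they compute the iterated difference $M_{ab}(x)$, specialise to $a=b$, and observe that if some $L_i(a)=0$ then $M_{aa}(a)=0$, so $M_{aa}$ fails to be a permutation. Your proof is the planar-function version of precisely this idea: take $a=\alpha$ a nonzero root of $L_1$, compute $\Delta_{\Pi,\alpha}(x)=L_2(\alpha)L_1(x)$, and note that $\alpha$ itself lies in the kernel. The only cosmetic difference is that the planar case needs one fewer differencing step, so you reach the contradiction after a single $\Delta$ rather than two.
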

An analogous result also holds for Alltop functions:
\begin{thm}\label{thm:alltopadditive}
Let $L_1(x),L_2(x),L_3(x)\in\mathbb{F}_{p^r}[x]$ be additive polynomials where $p>3$.  If $A(x)=L_1(x)L_2(x)L_3(x)$ is an Alltop polynomial then $L_1(x),L_2(x),L_3(x)$ are all permutation polynomials.
\end{thm}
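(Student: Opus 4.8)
The plan is to compute the difference function of $A=L_1L_2L_3$ explicitly, discard its additive and constant parts to reduce to a Dembowski--Ostrom ``quadratic'' part, and then choose $a$ so that this quadratic part becomes an honest product of two additive polynomials, at which point the theorem of \cite{KO2011} quoted above applies.

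First I would expand, using $L_i(x+a)=L_i(x)+L_i(a)$,
\begin{align*}
\Delta_{A,a}(x) &= L_1(a)L_2(x)L_3(x)+L_2(a)L_1(x)L_3(x)+L_3(a)L_1(x)L_2(x)\\
&\quad +L_2(a)L_3(a)L_1(x)+L_1(a)L_3(a)L_2(x)+L_1(a)L_2(a)L_3(x)\\
&\quad +L_1(a)L_2(a)L_3(a).
\end{align*}
The second line is additive in $x$ (the $L_i(a)$ are constants) and the third line is constant, so by the remark following equation (\ref{eqn:planarcalc}) -- applied in both directions, which is legitimate since the negative of an additive polynomial is again additive -- $\Delta_{A,a}$ is planar if and only if
\[
Q_a(x):=L_1(a)L_2(x)L_3(x)+L_2(a)L_1(x)L_3(x)+L_3(a)L_1(x)L_2(x)
\]
is planar. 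Since $A$ is an Alltop polynomial, this shows $Q_a$ is a planar polynomial for every $a\in\mathbb{F}_{p^r}^*$.

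Next I would argue by contradiction: suppose some $L_i$ -- say $L_3$, by the symmetry of the product -- is not a permutation polynomial. By \cite[Thm 7.9]{LN97} it then has a nonzero root, so I may pick $a\in\mathbb{F}_{p^r}^*$ with $L_3(a)=0$. For that $a$ the third term of $Q_a$ vanishes and
\[
Q_a(x)=L_3(x)\bigl(L_1(a)L_2(x)+L_2(a)L_1(x)\bigr),
\]
a product of the additive polynomial $L_3(x)$ and the additive polynomial $M(x):=L_1(a)L_2(x)+L_2(a)L_1(x)$. If $M\equiv 0$ then $Q_a\equiv 0$, which is not planar, a contradiction; otherwise $Q_a$ is a product of two nonzero additive polynomials, so the theorem of \cite{KO2011} forces $L_3$ to be a permutation polynomial, contradicting $L_3(a)=0$ with $a\neq 0$. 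Either way we reach a contradiction, so $L_3$ -- and, applying the same argument with the roles of the $L_i$ permuted, each of $L_1,L_2,L_3$ -- is a permutation polynomial.

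I expect the only genuinely delicate point to be the reduction ``$\Delta_{A,a}$ planar $\iff$ $Q_a$ planar'': one must be careful that equation (\ref{eqn:planarcalc}) is invoked as an equivalence rather than a one-way implication, and that the degenerate possibility $M\equiv 0$ (equivalently $Q_a\equiv 0$) is disposed of separately rather than fed into \cite{KO2011}. Once those bookkeeping issues are handled, the substantive content is the single observation that imposing $L_3(a)=0$ collapses the three-term $Q_a$ into a product of two additive maps, and \cite{KO2011} supplies the rest.
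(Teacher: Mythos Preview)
Your argument is correct, but it diverges from the paper's at the final step. Both proofs expand $\Delta_{A,a}$ and strip off the additive and constant parts to obtain the same Dembowski--Ostrom piece $Q_a=\Pi_a$, and both then assume some $L_i$ has a nonzero root $a$. At that point the paper proceeds directly: it computes the second difference $\Delta_{\Pi_a,b}$, specialises to $b=a$, and observes that the resulting additive polynomial $M_{aa}(x)=2L_1(x)L_2(a)L_3(a)+2L_2(x)L_1(a)L_3(a)+2L_3(x)L_1(a)L_2(a)$ vanishes at $x=a$, so it is not a permutation and $\Pi_a$ is not planar. You instead observe that $L_3(a)=0$ collapses $Q_a$ to the product $L_3(x)\cdot M(x)$ of two additive polynomials and feed this into the Kyureghyan--\"Ozbudak theorem. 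Your route is a tidy reduction of the three-factor statement to the two-factor one already quoted; the paper's route is more self-contained, avoiding the appeal to \cite{KO2011} entirely in favour of exhibiting an explicit nonzero root of the second difference. Your handling of the degenerate case $M\equiv 0$ and of the biconditional in equation~(\ref{eqn:planarcalc}) is sound.
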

\begin{proof}
For $a\in\mathbb{F}_{p^{r}}^*$, the difference function of $A(x)$ is
\begin{align}
\Delta_{A,a}(x)= & L_1(x)L_2(x)L_3(a) + L_1(x)L_2(a)L_3(x)\nonumber\\
  & + L_1(a)L_2(x)L_3(x) + L_a(x)+ c
\end{align}
where $L_a(x)$ is an additive polynomial, and $c\in\mathbb{F}_{p^r}$.
Let
\begin{align}
\Pi_a(x)= & L_1(x)L_2(x)L_3(a) + L_1(x)L_2(a)L_3(x) \nonumber\\
& + L_1(a)L_2(x)L_3(x),
\end{align}
 then $\Delta_{A,a}(x)$ is planar if and only if $\Pi_a(x)$ is planar for all $a\in\mathbb{F}_{p^r}^*$. Further, for $b\in\mathbb{F}_{p^{r}}^*$,
\begin{align}
\Delta_{\Pi_a,b}(x)=  & L_1(x)[L_2(a)L_3(b)+L_2(b)L_3(a)]\nonumber\\
& + L_2(x)[L_1(a)L_3(b)+L_1(b)L_3(a)] \nonumber \\
& +L_3(x)[L_1(a)L_2(b)+L_1(b)L_2(a)]+c
\end{align}
where $c\in \mathbb{F}_{p^r}$.

Let
$M_{ab}(x)=\Delta_{\Pi_a,b}-c$.
Then $M_{ab}(x)$ is an additive polynomial, and is thus a permutation polynomial if and only if its only root is $0$ \cite[Thm 7.9]{LN97}.
Putting $a=b$ gives
\begin{align}
M_{aa}(x)= &  7 2L_1(x)L_2(a)L_3(a)+ 2L_1(a)L_2(x)L_3(a)\nonumber\\
&  7 +2L_1(a)L_2(a)L_3(x).
\end{align}
Assume that there exists $a\in\mathbb{F}_{p^r}^*$ such that $L_1(a)=0$ or $L_2(a)=0$ or $L_3(a)=0$.  In each case $M_{aa}(a)=0$, and hence is not a permutation polynomial.
\end{proof}

\underline{Note}: The condition of Theorem \ref{thm:alltopadditive} is necessary but not sufficient.  For example let $L_1(x)=x^p$, $L_2(x)=x$ and $L_3(x)=x$ in $\mathbb{F}_{p^2}$.  Through simple but tedious calculations it is noted that

\[\Delta\Delta_{A,a,b}(x)=2axb(x^{p-1}+a^{p-1}+b^{p-1}) +c_{ab}\]
for $a,b \neq 0$ and $c_{ab}\in\mathbb{F}_{p^2}$. If $p\equiv 2(\!\!\!\!\mod 3)$, then \mbox{$|\mathbb{F}_{p^2}^*|\equiv 0(\!\!\!\!\mod 3)$}, and hence $\mathbb{F}_{p^2}^*$ contains a subgroup of order $3$. Let $j$ be a generator of this subgroup, then $j^{p-1}=j$. Let $x=j^2$, $a=j$, and $b=1$, then $x^{p-1}+a^{p-1}+b^{p-1}=0$, and hence $\Delta\Delta_{A,a,b}(x)$ is not a permutation. Thus if \mbox{$p\equiv 2(\!\!\!\!\mod 3)$}, then $A(x)=x^{p+2}$ is not an Alltop function on $\mathbb{F}_{p^2}$.

\section{A new  Alltop polynomial \label{sec:new}}
Until now, the only example of an Alltop function was $x^3$.  We now introduce a family of Alltop polynomials along with the corresponding planar function.
\begin{thm}\label{thm:p^r+2} Let $p\geq 5$ be an odd prime and  $r$ an integer such that $3$ does not divide $p^{r}+1$.  Then
$A(x)=x^{p^r+2}$
is an   Alltop polynomial on $\mathbb{F}_{p^{2r}}$.
\end{thm}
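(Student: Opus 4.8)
The plan is to verify the Alltop property directly from the definition: I must show that for every $a\in\mathbb{F}_{p^{2r}}^*$ the difference function $\Delta_{A,a}(x)=(x+a)^{p^r+2}-x^{p^r+2}$ is planar, i.e.\ that for every $b\in\mathbb{F}_{p^{2r}}^*$ the second difference $\Delta_{\Delta_{A,a},b}(x)$ is a permutation of $\mathbb{F}_{p^{2r}}$. Since $x\mapsto x^{p^r}$ is the Frobenius on $\mathbb{F}_{p^{2r}}$ fixing $\mathbb{F}_{p^r}$, it is additive, so $(x+a)^{p^r}=x^{p^r}+a^{p^r}$. Expanding, $(x+a)^{p^r+2}=(x^{p^r}+a^{p^r})(x+a)^2$, and subtracting $x^{p^r+2}=x^{p^r}x^2$ should give a polynomial in $x$ whose top-degree-in-$x$ behaviour is governed by terms like $a^{p^r}x^2$, $2a\,x^{p^r+1}$, and so on. First I would carry out this expansion carefully to get $\Delta_{A,a}(x)$ as an explicit sum of monomials $x^{p^r+1}$, $x^{p^r}$, $x^2$, $x$ (plus a constant), with coefficients that are polynomial expressions in $a$ and $a^{p^r}$.

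Next I would take the second difference. Using additivity of Frobenius again, the monomials $x^{p^r}$ and $x$ contribute only additive (hence constant-after-differencing) terms, and $x^{p^r+1}$ and $x^2$ are Dembowski--Ostrom, so $\Delta_{\Delta_{A,a},b}(x)$ will be an \emph{additive} polynomial in $x$ of the shape $\alpha_{a,b}\,x^{p^r}+\beta_{a,b}\,x+(\text{const})$, where $\alpha_{a,b},\beta_{a,b}$ are explicit expressions in $a,a^{p^r},b,b^{p^r}$ coming from the cross terms of $x^{p^r+1}$ and from $2b$ times the $x^2$ part. By \cite[Thm 7.9]{LN97} such an additive polynomial is a permutation of $\mathbb{F}_{p^{2r}}$ if and only if its only root is $0$; equivalently, writing $L(x)=\alpha x^{p^r}+\beta x$, one needs $L(x)=0\Rightarrow x=0$. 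For $x\neq 0$ this fails precisely when $x^{p^r-1}=-\beta/\alpha$, i.e.\ when $-\beta/\alpha$ is a $(p^r-1)$-th power in $\mathbb{F}_{p^{2r}}^*$ (equivalently an element of norm-type condition). So the crux reduces to showing that the quantity $-\beta_{a,b}/\alpha_{a,b}$ is \emph{never} a $(p^r-1)$-th power for any choice of $a,b\in\mathbb{F}_{p^{2r}}^*$ — and this is exactly where the hypothesis $3\nmid p^r+1$ must be used.

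The main obstacle, then, is the number-theoretic core: showing that the rational expression $-\beta_{a,b}/\alpha_{a,b}$ (which I expect, after simplification, to look like $\pm(a^{p^r-1}+b^{p^r-1}+(\text{cross term}))$ divided by something, cf.\ the analogous computation in the Note after Theorem~\ref{thm:alltopadditive} where the obstruction was $x^{p-1}+a^{p-1}+b^{p-1}=0$) cannot hit the image of the $(p^r-1)$-th power map. The image of $x\mapsto x^{p^r-1}$ on $\mathbb{F}_{p^{2r}}^*$ is the unique subgroup of order $p^r+1$; so I would rewrite the non-permutation condition as: some product/sum of $(p^r-1)$-th powers lies in that order-$(p^r+1)$ subgroup, and then derive a contradiction. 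The role of $3\nmid p^r+1$ is presumably that it forbids a certain cube root of unity (or a solution of a quadratic like $t^2+t+1=0$, or a primitive third root) from living in that subgroup — paralleling the Note, where the \emph{failure} case needed $3\mid p^2-1$ to find an order-$3$ element $j$ with $j^{p-1}=j$. I would set $t=a^{p^r-1}$, $s=b^{p^r-1}$ (both in the order-$(p^r+1)$ subgroup), reduce the bad case to an equation forcing an element of order dividing $3$ into that subgroup, and conclude that $3\nmid p^r+1$ rules it out. Handling all $a,b$ uniformly — including the degenerate subcase $b=a$, where the coefficient picks up a factor $6$ or $3$ that is invertible since $p\geq 5$ — is the bookkeeping I would be most careful about.
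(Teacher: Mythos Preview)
Your proposal is correct and follows essentially the same route as the paper: expand $\Delta_{A,a}$, strip the affine part to the Dembowski--Ostrom piece $\Pi_a(x)=2ax^{p^r+1}+a^{p^r}x^2$, take the second difference to get the additive map $(2ab)x^{p^r}+(2ab^{p^r}+2a^{p^r}b)x$, and reduce non-permutation to $x^{p^r-1}+a^{p^r-1}+b^{p^r-1}=0$ with all three terms in the order-$(p^r+1)$ subgroup. The paper's device for the final step is exactly the ``conjugation'' you anticipate---raise the vanishing relation to the $p^r$-th power (which inverts each element of $\mu_{p^r+1}$) and subtract, forcing $(a/b)^{p^r-1}$ to satisfy $t^2+t+1=0$, hence to be a primitive cube root of unity in $\mu_{p^r+1}$, impossible when $3\nmid p^r+1$; your $b=a$ worry is unnecessary since $\alpha=2ab\neq0$ always.
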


In order to show that $A(x)$ is an Alltop function, a planar function is needed.  Let
\[D_f(x,y)=f(x+y)-f(x)-f(y).\]
If $f$ is a planar function, then $D_f(x,y)$ is a bijection in $x$ for all nonzero $y$.  The definition of a planar function may be given using the function $D_f$ rather than $\Delta_{f,a}$ \cite{CH2008}.  An advantage of $D_f$ is that if $f$ is a  Dembowski-Ostrom polynomial, then  $D_f(x,y)\neq 0$ for all $x,y\neq 0$ if and only if $f$ is planar.

\begin{proof}
For $a\in\mathbb{F}_{p^{r}}^*$, the difference function of $A(x)$ is
 \begin{align}
\Delta_{A,a}(x) &= 2ax^{p^r+1}+a^2x^{p^r}+a^{p^r}x^2+2a^{p^r+1}x+a^{p^r+2}.
 \end{align}
Any planar function $f$ is EA-equivalent to $f+L+c$ where $L$ is a linear function and $c$ is a constant.
Hence, for any $a\in\mathbb{F}_{p^{2r}}^*$, $\Delta_{A,a}(x)$ is EA-equivalent to
\[ \Pi_a(x)=2ax^{p^r+1}+a^{p^r}x^2\]
and
\begin{align}
D_{\Pi_{a}}(x,b)= \;& 2a x^{p^r}b+2axb^{p^r}+2a^{p^r}xb.
\end{align}
To show that $\Pi_a$ is a planar function, it must be shown that $D_{\Pi_a}(x,b)=0$ if and only if at least one of  $x$ or $b$ is zero.
Suppose both $x$ and $ b$ are non-zero and 
\begin{align}
& & D_{\Pi_a}(x,b) & =0\\
\Longrightarrow & &  2a x^{p^r}b+2axb^{p^r}+2a^{p^r}xb & =0\\
\Longrightarrow & &  x^{p^r}b+xb^{p^r}+a^{p^r-1}xb & =0 \label{eqn:D}\\
\Longrightarrow & &
 \left(x^{p^r}b+xb^{p^r}+a^{p^r-1}xb\right)^{p^r} & =0 \\
\Longrightarrow & &  x^{p^r}b+xb^{p^r}+a^{1-p^r}x^{p^r}b^{p^r} & =0 \label{eqn:Dq}
\end{align}
By subtracting equation  (\ref{eqn:Dq}) from (\ref{eqn:D}) it follows that
\begin{align}
 & & a^{p^r-1}xb-a^{1-p^r}x^{p^r}b^{p^r} & =0,\\
\Longrightarrow & &  1-\left(\frac{xb}{a^2}\right)^{p^r-1} & =0.
\end{align}
Thus, there exists an element $c\in \mathbb{F}_{p^r}^*$ such that $xb=ca^2$.  From equation (\ref{eqn:D})
\begin{align}
& & xb\left(x^{p^r-1}+b^{p^r-1}+a^{p^r-1}\right) &=0,\\
\Longrightarrow & & ca^2\left[\left(\frac{c}{b}a^2\right)^{p^r-1}+b^{p^r-1}+a^{p^r-1}\right] & =0.
\end{align}
Since $c\in\mathbb{F}_{p^r}^*$, it is known that $c^{p^r-1}=1$ and hence
\begin{align}
& & \left(\frac{a^2}{b}\right)^{p^r-1}+b^{p^r-1}+a^{p^r-1} & =0\\
\Longrightarrow & & \left(\left(\frac{a}{b}\right)^{p^r-1}\right)^2+\left(\frac{a}{b}\right)^{(p^r-1)}+1 & =0\label{eqn:ab1}\\
\Longrightarrow & & \left(\left(\frac{a}{b}\right)^{p^r-1}\right)^3-1 & =0 \label{eqn:ab2}
\end{align}
Since $p\neq 3$, from equation (\ref{eqn:ab1}), $(a/b)^{p^r-1}\neq 1$.  Hence, equation (\ref{eqn:ab2}) has solutions if and only if $3$ divides $\frac{p^{2r}-1}{p^r-1}=p^r+1$.
\end{proof}

Note:
Let $1$ and $0$  be the multiplicative and additive identity elements of $\mathbb{F}_{p^{2r}}$ respectively with the conditions on $p$ and $r$  as in Theorem \ref{thm:p^r+2}. Let $3=1+1+1$ and   let $(-3)$ be the element of $\mathbb{F}_{p^{2r}}$  for which $(-3)+3=0$.    Thus $(-3)\in\mathbb{F}_p$, the subfield of order $p$.   Now let $\alpha$ be a generator of the cyclic group $\mathbb{F}_{p^{2r}}^*$ and let $\beta=\alpha^{(p^r+1)/2}$.  Thus $\beta$ is not in the subfield of order $p$ whereas $\beta^2$ is. Furthermore $\beta^2$ is a generator of $\mathbb{F}_{p}^*$, the cyclic group of order $p-1$.  Hence, there exists an integer $n$ such that $(\beta^2)^n=-3$.  Therefore, if $\gamma=\beta^n$ then $\gamma^2=-3$ and
\begin{align}
& a^{p^r}\Big(2ax^{p^r+1}+a^{p^r}x^2\Big)+\frac{-1+\gamma}{2}a\Big(2ax^{p^r+1}+a^{p^r}x^2\Big)^{p^r}\nonumber\\
& =\left(a^{p^r}x+\frac{1+\gamma}{2}ax^{p^r}\right)^2\!.
\end{align}
Hence $\Pi_a(x)$ is EA-equivalent to $x^2$ for all $a\in\mathbb{F}_{p^{2r}}^*$.

We use an extension of \cite[Prop 1]{BH2011C} to show that \mbox{$A(x)=x^{p^r+2}$} is not EA-equivalent to $x^3$.
\begin{lem}\label{lem:equivx^3}
Let $p$ be an odd prime and $n$ a positive integer.  Any function $f$ over $\mathbb{F}_{p^n}$ of the form
\[f(x)=\sum_{0\leq k,j,i<n}a_{kji}x^{p^k+p^j+p^i}\]
such that $a_{kji}=0$ for  $k=j=i$,  is EA-inequivalent to $x^3$.
\end{lem}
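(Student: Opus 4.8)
The plan is to argue by contradiction: suppose $f$ were EA-equivalent to $x^3$, and derive that $f$ must have a ``pure'' cube term $a_{kkk}x^{3p^k}$ with $a_{kkk}\neq 0$ for some $k$, contradicting the hypothesis that all coefficients with $k=j=i$ vanish. Concretely, EA-equivalence means $f(x)=l_1(x^3)\cdot\text{[after substitution]}$, i.e. there are affine permutations $l_1,l_2$ and an affine $l_3$ with $f(x)=l_1\bigl((l_2(x))^3\bigr)+l_3(x)$. Writing $l_2(x)=\sum_{i} b_i x^{p^i}+b$ and $l_1(y)=\sum_k c_k y^{p^k}+c$, I would expand $(l_2(x))^3$ as a sum of monomials $x^{p^{i_1}+p^{i_2}+p^{i_3}}$ (plus lower-degree terms coming from the constant $b$), then apply $l_1$ (which raises everything to powers $p^k$ and scales), and finally add $l_3(x)$ (which only contributes degree-$\le 1$ monomials $x^{p^k}$ and a constant). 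The upshot is that $f$, as a reduced polynomial, is a sum of monomials of shape $x^{p^{k+i_1}+p^{k+i_2}+p^{k+i_3}}$ together with monomials of strictly smaller $p$-weight.

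The key step is to examine the coefficients of the genuinely cubic monomials — those $x^{p^a+p^b+p^c}$ arising with all three exponents ``active.'' Since $l_2$ is a permutation, at least one $b_i\neq 0$; similarly at least one $c_k\neq 0$. Pick such an $i$ and such a $k$ and look at the monomial $x^{3p^{k+i}}$ (indices mod $n$): after expansion it receives the contribution $c_k\, b_i^{\,3p^k}$ from the term where all three factors of the cube are the $b_i x^{p^i}$ term, raised to the $p^k$ power inside $l_1$. I would need to check that no \emph{other} term of $f$ contributes to this same monomial $x^{3p^{k+i}}$ — the only candidates are mixed terms $b_{i_1}b_{i_2}b_{i_3}$ with $p^{i_1}+p^{i_2}+p^{i_3}\equiv 3p^i \pmod{p^n-1}$ after the $p^k$-power, and since $p$-ary expansions are essentially unique one argues $i_1=i_2=i_3=i$ is forced (the $p$-weight of $3p^{k+i}$ reduced mod $p^n-1$ is $3$, matching a sum of three equal powers; any genuinely distinct triple gives a different residue, using $p\ge 3$ so there is no carrying collapse). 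Hence the coefficient of $x^{3p^{k+i}}$ in the reduced form of $f$ equals $c_k b_i^{\,3p^k}\neq 0$. But $3p^{k+i}=p^{k+i}+p^{k+i}+p^{k+i}$ is exactly a monomial of the form $x^{p^{k'}+p^{j'}+p^{i'}}$ with $k'=j'=i'$, whose coefficient in $f$ is assumed to be $0$ — contradiction.

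The main obstacle is the combinatorial bookkeeping of which monomials of the expanded product $l_1\circ(l_2)^3$ reduce (modulo $x^{p^n}-x$, i.e. exponents mod $p^n-1$) to the target exponent $3p^{k+i}$, and ensuring that the coefficient there really is nonzero rather than a cancellation of several contributions. This is exactly where one invokes the extension of \cite[Prop 1]{BH2011C}: the hypothesis $p\ge 3$ (odd prime) guarantees that $3 = 1+1+1 < p+1$ does not cause carrying in base $p$, so the $p$-ary digit pattern of $3p^{k+i}$ is a single digit $3$ in one position and $0$ elsewhere, which can only be produced by the triple of equal powers; all other triples land on distinct exponents and cannot interfere. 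Once this is pinned down, summing over the (at most one) contributing configuration gives the clean nonzero coefficient $c_k b_i^{3p^k}$, and the contradiction follows. I would also remark that the constant terms $b$ in $l_2$ and the affine part $l_3$ are irrelevant here since they only lower the degree of the monomials they touch and hence cannot affect the top-weight monomial $x^{3p^{k+i}}$.
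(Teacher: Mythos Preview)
Your overall strategy matches the paper's --- assume EA-equivalence, expand $l_1\bigl((l_2(x))^3\bigr)+l_3(x)$, and locate a nonzero coefficient on a monomial $x^{3p^m}$ that the hypothesis on $f$ forbids --- but the specific step where you ``pick such an $i$ and such a $k$'' and claim the coefficient of $x^{3p^{k+i}}$ equals $c_k b_i^{3p^k}$ has a gap. You correctly use the base-$p$ digit argument to see that, for a \emph{fixed} $k$, the only triple $(i_1,i_2,i_3)$ with $p^{i_1}+p^{i_2}+p^{i_3}\equiv 3p^i$ is the diagonal one. But you have not fixed $k$: every other index $k'$ in $L_1$ contributes a term $c_{k'}\,b_{m-k'}^{\,3p^{k'}}$ to the same target monomial $x^{3p^{m}}$ (with $m=k+i$, indices mod $n$), because $(p^{m-k'})^{p^{k'}}\cdot 3 = 3p^m$ as well. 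So the actual coefficient is the sum $\sum_{k'} c_{k'}\,b_{m-k'}^{\,3p^{k'}}$, and nothing you have written rules out cancellation.

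The paper closes exactly this gap by arguing globally rather than pointwise. It separates $(l_2(x))^3$ into the diagonal part $\sum_i d_i^{3}x^{3p^i}$ and a remainder $M(x)$ containing no $x^{3p^i}$ terms; your digit argument shows that $L_1\circ M$ still has no $x^{3p^j}$ terms, and $l_3$ is affine, so the entire $x^{3p^j}$-content of $f$ equals $L_1\bigl(\sum_i d_i^{3}x^{3p^i}\bigr)$. The hypothesis forces this to vanish identically, and now one invokes the fact that $L_1$ is a \emph{permutation}: $L_1\circ g=0$ implies $g=0$, hence every $d_i=0$, so $L_2=0$ and $l_2$ is not a permutation --- the desired contradiction. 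In short, the missing ingredient in your argument is precisely the use of the bijectivity of $L_1$ to pass from ``all $x^{3p^j}$ coefficients vanish'' to ``all $d_i$ vanish''; without it, a single chosen monomial does not suffice.
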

\begin{proof}
If $f(x)$ is EA-equivalent to $x^3$ then there exist affine functions $l_1(x),l_2(x)$ and $l_3(x)$ such that
\[ l_1\circ (l_2(x))^3+l_3(x)=f(x).\]
Let $l_1(x)=L_1(x)+c$ and $l_2(x)=L_2(x)+d=\sum_{i=0}^{n-1}d_ix^{p^i}+d$, where $L_1(x)$ and $L_2(x)$ are additive functions and $c, d$ are constants.  Then
\begin{align}
l_1\circ (l_2(x))^3+l_3(x) = \; &  l_1\left(\sum_{i=0}^{n-1}d_i^3x^{3p^i}+ M(x)\right)+l_3(x),\nonumber\\
=\; &  L_1\left(\sum_{i=0}^{n-1}d_i^3x^{3p^i}\right)+ L_1\circ M(x)\nonumber\\
 & +c+l_3(x), \label{eqn:M}
\end{align}
where $M(x)$ contains terms which are not of the form $x^{3p^i}$.  Since $f(x)$ contains no term of the form $x^{3p^i}$,  the right hand side of equation \ref{eqn:M} must have no terms of the form  $x^{3p^i}$.  The functions $L_1\circ M(x)$ and $l_3(x)$ have no terms of the form  $x^{3p^i}$, therefore it is also required that $ L_1\left(\sum_{i=0}^{n-1}d_i^3x^{3p^i}\right)$ has no terms of the form  $x^{3p^i}$.  This therefore requires that $d_i=0$ for all $0\leq i\leq n-1$.   Hence $L_2=0$, that is $l_2(x)$ is not a permutation, and hence $f(x)$ is not EA-equivalent to $x^3$.
\end{proof}
As a corollary $A(x)=x^{p^r+2}$ is not EA-equivalent to $x^3$.

\section{Mutually unbiased bases \label{sec:mubs}}

Alltop's original construction of sequences was published in 1980 \cite{Alltop80}.  Some years later it was noted that this construction could also be used to construct mutually unbiased bases \cite{KR03}, a structure of importance in quantum information theory \cite{BB84, SARG04, WF89}, and signal set design \cite{DY2007}.

Two orthonormal bases $B_1$ and $B_2$ of  $\mathbb{C}^d$ are \emph{unbiased} if $|\langle \vec{x}|\vec{y}\rangle|=\frac{1}{\sqrt{d}}$ for all $\vec{x}\in B_1$ and $\vec{y}\in B_2$. A set of bases of $\mathbb{C}^d$ which are pairwise unbiased is a set of \emph{mutually unbiased bases} (MUBs).
This idea arose in a quantum physics context \cite{Schw60} when it was noted that
a quantum system prepared in a basis state from $B_1$ reveals no
information when measured with respect to the basis $B_2$.   There can be a maximum of $d+1$ MUBs in $\mathbb{C}^d$ and such sets are called complete. There are constructions of complete sets of MUBs in all prime power dimensions \cite{KR03, WF89}. However it is unknown if complete sets exist when $d$ is not a prime power \cite{SPR04}.

 The following is a construction of MUBs in odd prime powers which uses planar functions.  Let $\omega_p=e^{i\pi/p}$ and $\chi(x)=\omega_p^{\text{tr}(x)}$.
  \begin{thm}[Planar function construction]\label{royscott}\cite[Thm 4.1]{RS07}\cite[Thm 4]{DY2007}
Let $\mathbb{F}_q$ be a field of odd characteristic $p$ and $\Pi(x)$ a planar function on $\mathbb{F}_q$.  Let $V_a:=\{\vec{v}_{ab}:b\in \mathbb{F}_q\}$ be the set of vectors
\begin{equation}\label{planarv}
\vec{v}_{ab}= \frac{1}{\sqrt{q}}\left(\omega_p^{\text{\emph{tr}}(a\Pi(x)+bx)}\right)_{x\in\mathbb{F}_q}= \frac{1}{\sqrt{q}}\Big(\chi\left(a\Pi(x)+bx\right)\Big)_{x\in\mathbb{F}_q}\quad
\end{equation}
 with $a,b\in\mathbb{F}_q$. The standard basis $E$ along with the sets $V_a$, $a\in \mathbb{F}_q$, form a complete set of $q+1$ MUBs  in $\mathbb{C}^q$.
\end{thm}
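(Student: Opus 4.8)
The plan is to verify directly the three conditions in the definition of a complete set of MUBs. The list $E,V_0,V_1,\dots$ contains exactly $q+1$ bases of $\mathbb{C}^q$, and $q+1$ is the largest possible number of pairwise unbiased bases in $\mathbb{C}^q$, so it suffices to check that (i) each $V_a$ is an orthonormal basis, (ii) $E$ is unbiased with every $V_a$, and (iii) $V_a$ and $V_{a'}$ are unbiased for $a\neq a'$; completeness is then automatic. All three reduce to evaluating sums over $\mathbb{F}_q$: for (i) and (ii) these are sums of a nontrivial additive character, which vanish, and for (iii) it is exactly the planar character sum of Theorem~\ref{thm:planarsum}. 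Throughout I use that $\overline{\chi(z)}=\chi(-z)$.

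First I would treat (i). Each $\vec v_{ab}$ has all $q$ of its coordinates of modulus $1/\sqrt q$, so $\|\vec v_{ab}\|=1$; and for $b\neq b'$,
\begin{equation}
\langle \vec v_{ab},\vec v_{ab'}\rangle=\frac1q\sum_{x\in\mathbb{F}_q}\chi\big((b'-b)x\big)=0,
\end{equation}
because $x\mapsto\chi((b'-b)x)$ is a nontrivial additive character of $(\mathbb{F}_q,+)$ and the values of such a character sum to zero. Hence $V_a$ is a set of $q$ orthonormal vectors in $\mathbb{C}^q$, i.e.\ an orthonormal basis. For (ii), writing $\vec e_y$ for the standard basis vector supported at $y$, we get $\langle \vec e_y,\vec v_{ab}\rangle=\tfrac1{\sqrt q}\chi(a\Pi(y)+by)$, which has modulus $1/\sqrt q$, so $E$ and $V_a$ are unbiased.

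For (iii), fix $a\neq a'$ and arbitrary $b,b'$. Then
\begin{equation}
\langle \vec v_{ab},\vec v_{a'b'}\rangle=\frac1q\sum_{x\in\mathbb{F}_q}\chi\big((a'-a)\Pi(x)+(b'-b)x\big).
\end{equation}
Since $a'-a\in\mathbb{F}_q^*$, Theorem~\ref{thm:planarsum} shows that this sum has modulus $\sqrt q$, and therefore $|\langle \vec v_{ab},\vec v_{a'b'}\rangle|=1/\sqrt q$. This is the only place the planarity of $\Pi$ enters, and it is the crux of the argument; (i), (ii), and the remainder of (iii) are routine character arithmetic, so there is no real obstacle beyond having Theorem~\ref{thm:planarsum} in hand.

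The one point needing a line of care is reconciling the character $\chi=\omega_p^{\mathrm{tr}(\cdot)}$ used here with the character appearing in Theorem~\ref{thm:planarsum}. Since $p$ is odd, every nontrivial additive character of $\mathbb{F}_q$ has the form $x\mapsto\chi_1(cx)$ for a fixed canonical character $\chi_1$ and some $c\in\mathbb{F}_q^*$; substituting $x$ unchanged and replacing the coefficient $a'-a$ by $c(a'-a)$, which is still nonzero, reduces the sum in (iii) to the exact hypothesis of Theorem~\ref{thm:planarsum}. With that reduction noted, the three verifications above complete the proof.
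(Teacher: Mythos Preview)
Your proof is correct. Note, however, that the paper does not supply its own proof of this theorem: it is quoted from \cite{RS07} and \cite{DY2007} without argument. That said, your verification mirrors exactly the method the paper uses to prove the analogous Theorem~\ref{alltopMUBs} for Alltop functions---compute the inner product, reduce the cross-basis case to the planar character sum of Theorem~\ref{thm:planarsum}, and handle the within-basis and standard-basis cases by the vanishing of a nontrivial additive character---so your approach is the intended one. Your final paragraph about reconciling the normalisation of $\chi$ is a fair caveat (the paper has $\omega_p=e^{i\pi/p}$ just above the theorem but $\omega=e^{2i\pi/p}$ in Theorem~\ref{thm:planarsum}), and your fix via $\chi(x)=\chi_1(cx)$ with $c\in\mathbb{F}_q^*$ absorbed into the nonzero coefficient $a'-a$ is the clean way to handle it.
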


Alltop's original construction only works for $\mathbb{C}^q$ where $q\geq 5$ is a prime \cite{Alltop80}.  A generalization to prime powers was done by Klappeneker and Roettler in 2003 \cite{KR03}.

\begin{thm}[Alltop's Construction]\cite[Thm 1]{KR03}\label{alltop}
Let $\mathbb{F}_q$ be a field of odd characteristic $p\geq5$.  Let $V_a:=\{\vec{v}_{ab}:b\in \mathbb{F}_q\}$ be the set of vectors
\begin{align}\label{Alltopv}
\vec{v}_{ab}:= &  \frac{1}{\sqrt{q}}\left(\omega_p^{\text{\emph{tr}}((x+a)^3+b(x+a))}\right)_{x\in\mathbb{F}_q}\nonumber\\
=  & \frac{1}{\sqrt{q}}\Big(\chi\left((x+a)^3+b(x+a)\right)\Big)_{x\in\mathbb{F}_q}\quad
\end{align}
  with $a,b\in\mathbb{F}_q$. The standard basis $E$ along with the sets $V_a$, $a\in \mathbb{F}_q$, form a complete set of $q+1$ MUBs  in $\mathbb{C}^q$.
 \end{thm}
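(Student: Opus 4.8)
The plan is to verify directly the three conditions defining a complete set of MUBs: that each $V_a$ is an orthonormal basis of $\mathbb{C}^q$, that each $V_a$ is unbiased with the standard basis $E$, and that $V_a$ is unbiased with $V_{a'}$ whenever $a\neq a'$. The first two are mostly bookkeeping; the cubic (Alltop) structure of the construction enters only in the third. Using $\overline{\chi(z)}=\chi(-z)$, the relevant quantity for any $a,a',b,b'\in\mathbb{F}_q$ is
\[
\langle \vec v_{ab},\vec v_{a'b'}\rangle=\frac{1}{q}\sum_{x\in\mathbb{F}_q}\chi\big((x+a)^3-(x+a')^3+b(x+a)-b'(x+a')\big).
\]

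For $a=a'$ the cubic terms cancel and this collapses to $\frac{1}{q}\sum_{y\in\mathbb{F}_q}\chi((b-b')y)$, which equals $1$ when $b=b'$ and $0$ otherwise; since $|V_a|=q$, each $V_a$ is therefore an orthonormal basis of $\mathbb{C}^q$. Unbiasedness of $V_a$ with $E$ is immediate, because every coordinate of $\vec v_{ab}$ has modulus $1/\sqrt{q}$.

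For $a\neq a'$ I would put $c=a-a'\in\mathbb{F}_q^{*}$ and substitute $y=x+a'$, so that $(x+a)^3-(x+a')^3=(y+c)^3-y^3=3c\,y^2+3c^2y+c^3$ and $b(x+a)-b'(x+a')=(b-b')y+bc$. Pulling the constant out of the sum,
\[
\langle \vec v_{ab},\vec v_{a'b'}\rangle=\frac{\chi(c^3+bc)}{q}\sum_{y\in\mathbb{F}_q}\chi\big(3c\,y^2+(3c^2+b-b')y\big).
\]
Since $p\geq5$ we have $3c\in\mathbb{F}_q^{*}$, so Theorem~\ref{thm:planarsum} applies with the planar function $\Pi(y)=y^2$, with $3c$ in the role of $a$ and $3c^2+b-b'$ in the role of $b$, giving $\big|\sum_{y\in\mathbb{F}_q}\chi(3c\,y^2+(3c^2+b-b')y)\big|=\sqrt{q}$. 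As $|\chi(c^3+bc)|=1$, this yields $|\langle \vec v_{ab},\vec v_{a'b'}\rangle|=1/\sqrt{q}$, i.e.\ $V_a$ and $V_{a'}$ are unbiased. Equivalently, this step records that the difference function $\Delta_{x^3,c}$ of the Alltop polynomial $x^3$ is planar for every $c\neq0$.

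Putting the cases together, the $q+1$ bases $E,V_0,V_1,\dots$ (one $V_a$ for each $a\in\mathbb{F}_q$) are pairwise unbiased and hence form a complete set of MUBs in $\mathbb{C}^q$. The only step with genuine content is the case $a\neq a'$, and the point that makes it work is the cancellation of the leading cubic term --- equivalently, the fact that $x^3$ is an Alltop polynomial on $\mathbb{F}_q$ for $p\geq5$ --- so that the planar character-sum estimate of Theorem~\ref{thm:planarsum} becomes available; everything else is routine orthogonality. This argument is also the prototype for the later, more general statement that every Alltop function generates a complete set of MUBs.
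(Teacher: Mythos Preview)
Your proof is correct. The paper does not include its own proof of this cited theorem, but your argument is exactly the specialization to $A(x)=x^3$ of the paper's proof of the more general Theorem~\ref{alltopMUBs}: the same substitution $y=x+a'$, $\alpha=a-a'$, the same reduction of the $a\neq a'$ case to a planar character sum, and the same appeal to Theorem~\ref{thm:planarsum} (here with $\Pi(y)=y^2$ and nonzero coefficient $3c$).
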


We now show that a complete set of MUBs can be constructed from any Alltop function over a field of odd characteristic $p \geq 5$.
\begin{thm}\label{alltopMUBs}
Let $\mathbb{F}_q$ be a field of odd characteristic $p\geq5$ and $A(x)$  an Alltop function over $\mathbb{F}_q$.  Let $V_a:=\{\vec{v}_{ab}:b\in \mathbb{F}_q\}$ be the set of vectors
\begin{align}\label{eqn:generalAlltopv}
\vec{v}_{ab}:= &  \frac{1}{\sqrt{q}}\left(\omega_p^{\text{\emph{tr}}(A(x+a)+b(x+a))}\right)_{x\in\mathbb{F}_q}\nonumber\\
= &  \frac{1}{\sqrt{q}}\Big(\chi\left(A(x+a)+b(x+a)\right)\Big)_{x\in\mathbb{F}_q}
\end{align}
  with $a,b\in\mathbb{F}_q$. The standard basis $E$ along with the sets $V_a$, $a\in \mathbb{F}_q$, form a complete set of $q+1$ MUBs  in $\mathbb{C}^q$.
 \end{thm}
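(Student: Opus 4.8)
The plan is to run the proof of the planar function construction of Theorem~\ref{royscott} essentially verbatim, with the difference function $\Delta_{A,a}$ taking over the role played there by the planar function; in particular the case $A(x)=x^3$ will recover Alltop's construction (Theorem~\ref{alltop}). Three things must be checked: (a) each $V_a$ is an orthonormal basis of $\mathbb{C}^q$; (b) the standard basis $E$ is unbiased with every $V_a$; and (c) $V_a$ and $V_{a'}$ are unbiased whenever $a\neq a'$. Since $q+1$ is the maximum possible number of mutually unbiased bases in $\mathbb{C}^q$, establishing (a)--(c) for the $q+1$ bases $E,\{V_a\}_{a\in\mathbb{F}_q}$ is exactly what the statement requires.

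First I would use that $\chi$ is an additive character, so that $\chi(s)\chi(t)=\chi(s+t)$ and $\overline{\chi(t)}=\chi(-t)$, to obtain
\[
\langle\vec{v}_{ab}|\vec{v}_{a'b'}\rangle=\frac1q\sum_{x\in\mathbb{F}_q}\chi\bigl(A(x+a')-A(x+a)+b'(x+a')-b(x+a)\bigr).
\]
Substituting $y=x+a$ and writing $d=a'-a$, the right-hand side becomes
\[
\frac{\chi(b'd)}{q}\sum_{y\in\mathbb{F}_q}\chi\bigl(\Delta_{A,d}(y)+(b'-b)y\bigr),
\]
where $\Delta_{A,d}(y)=A(y+d)-A(y)$. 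When $a=a'$ (so $d=0$) this collapses to $\tfrac1q\sum_y\chi((b'-b)y)$, which equals $1$ if $b=b'$ and $0$ otherwise by the orthogonality of additive characters; hence each $V_a$ is an orthonormal set of $q$ vectors in $\mathbb{C}^q$, i.e.\ an orthonormal basis, which gives (a). Claim (b) is immediate, since every coordinate of $\vec{v}_{ab}$ has modulus $1/\sqrt q$ and therefore $|\langle\vec{e}_j|\vec{v}_{ab}\rangle|=1/\sqrt q$ for each standard basis vector $\vec{e}_j$.

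The heart of the matter is (c). When $a\neq a'$ we have $d=a'-a\in\mathbb{F}_q^*$, so by the definition of an Alltop function $\Delta_{A,d}$ is a \emph{planar} function on $\mathbb{F}_q$. Applying Theorem~\ref{thm:planarsum} to the planar function $\Pi=\Delta_{A,d}$ with coefficient $1\in\mathbb{F}_q^*$ and linear term $b'-b$ gives $\bigl|\sum_{y}\chi(\Delta_{A,d}(y)+(b'-b)y)\bigr|=\sqrt q$; since $|\chi(b'd)|=1$ this yields $|\langle\vec{v}_{ab}|\vec{v}_{a'b'}\rangle|=\sqrt q/q=1/\sqrt q$, so $V_a$ and $V_{a'}$ are unbiased. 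Together with (a) and (b), this shows the $q+1$ bases are pairwise unbiased, completing the proof.

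I do not expect a serious obstacle: the Alltop hypothesis is used at exactly one point---passing from the shifted sum to a sum over the difference function $\Delta_{A,d}$ at a \emph{nonzero} parameter and then invoking the magnitude-$\sqrt q$ estimate---while the extraneous linear term $(b'-b)y$ and the unimodular constant $\chi(b'd)$ are harmless precisely because Theorem~\ref{thm:planarsum} permits an arbitrary linear term. The one place calling for care is the change of variables: one must shift by $a$ rather than by $a'$, so that the exponent becomes a genuine difference function $\Delta_{A,d}$ with $d=a'-a\neq0$ (rather than an expression built directly from $A$), which is exactly what makes Theorem~\ref{thm:planarsum} applicable.
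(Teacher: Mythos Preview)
Your proposal is correct and follows essentially the same route as the paper: compute the inner product, shift the summation variable to express the exponent as $\Delta_{A,d}(y)$ plus a linear term and a constant, then invoke Theorem~\ref{thm:planarsum} for the cross-basis case and character orthogonality for the within-basis case. Your argument is in fact slightly cleaner than the paper's, since you apply Theorem~\ref{thm:planarsum} directly (it already allows an arbitrary linear term $bx$), whereas the paper first appeals to Lemma~\ref{lem:planarfacts}/equation~(\ref{eqn:planarcalc}) to argue that the shifted-and-linearly-perturbed function is itself planar before invoking the character-sum bound.
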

\begin{proof}
\begin{align}\label{eqn:ip}
\langle\vec{v}_{ab}|\vec{v}_{cd}\rangle= & \frac{1}{q}\sum_{x\in\mathbb{F}_q}\chi\Big(A(x+a)-A(x+c)\nonumber\\
&\quad\quad\quad\quad +b(x+a)-d(x+c)\Big)
\end{align}
Let $y=x+c$ and $\alpha=a-c$, then
\begin{align}
\langle\vec{v}_{ab}|\vec{v}_{cd}\rangle= & \frac{1}{q}\sum_{x\in \mathbb{F}_q}\chi\Big(A(y+\alpha)-A(y)+(b-d)y+b\alpha \Big),\nonumber\\
= & \frac{1}{q}\sum_{x\in\mathbb{F}_q}\chi\Big(\Delta_{A,\alpha}(y)+(b-d)y+\alpha \Big).\label{eqn:alltopplanar}
\end{align}
From the definition of an Alltop type polynomial, we know that $\Delta_{A,\alpha}(y)$ is a planar polynomial, for $\alpha \neq 0$.
 By Lemma \ref{lem:planarfacts},  $\Delta_{A,\alpha}(x+c)+(b-d)(x+c)+b(a-c)$ is also a planar polynomial.  Then using Theorem \ref{thm:planarsum} we see that $|\langle\vec{v}_{ab}|\vec{v}_{cd}\rangle|=\frac{1}{\sqrt{q}}$ for $a\neq c$. 

When $a = c$, using \cite[Thm 5.4]{LN97}
\begin{align}
\langle\vec{v}_{ab}|\vec{v}_{ad}\rangle= & \frac{1}{q}\sum_{x\in\mathbb{F}_q}\chi\Big((b-d)(x+a)\Big)=0
\end{align}
 for $b \neq d$ and
\begin{align}
\langle\vec{v}_{ab}|\vec{v}_{ab}\rangle= & \frac{1}{q}\sum_{x\in\mathbb{F}_q}\chi\Big(0\Big)=1,
\end{align}
completing the orthonormal requirements for each of the bases.  Each component of each vector $\vec{v}_{ab}$ has magnitude $\frac{1}{\sqrt{q}}$, and is hence unbiased to the standard basis.  A complete set of MUBs in $\mathbb{C}^q$ has been generated.
\end{proof}

It has been shown \cite{GR09} that the set of MUBs generated using the Alltop function $A(x)=x^3$ is equivalent to the set of MUBs generated using the planar function $\Pi(x)=x^2$.  Is this equivalence  true in general?  A proof of the equivalence (or non-equivalence) of sets of Alltop MUBs and planar MUBs would require explicit calculation of character sums.  Unfortunately, explicit character sums are known in very limited cases \cite[5.30]{LN97}, insufficient to determine if the Alltop MUBs generated by $x^{p^r+2}$ are equivalent to the MUBs generated by $x^2$.

\section{Signal Sets  \label{sec:signals}}

Alltop's original paper \cite{Alltop80} was concerned with constructing sequences with low  correlation for use as signals in telecommunication devices.  Such sequences have important applications in communications technologies such as code-division multiple access (CDMA) and Direct Sequence CDMA (DS-CDMA).

Let $C=\{\vec{v}_i:0\leq i\leq N-1\}\subset \mathbb{C}^K$ be a set of normed complex vectors.  This is called an $(N,K)$ \emph{signal set}. The \emph{root-mean square} and \emph{maximum correlation amplitudes} are defined as
\begin{equation}
I_\textrm{rms}(C):=\sqrt{\frac{1}{N(N-1)}\sum_{\begin{smallmatrix}0\leq i,j\leq N-1\\ i\neq j \end{smallmatrix}}|\langle \vec{v}_i|\vec{v}_j\rangle|^2}
\end{equation}
\begin{equation}
I_\textrm{max}(C):= \max_{0\leq i<j\leq N-1}|\langle \vec{v}_i|\vec{v}_j\rangle|.
\end{equation}

CDMA and DS-CDMA  use signal sets to distinguish between the signals of different users.  $(N,K)$ signal sets with $N>K$ which are optimal with respect to $I_{\textrm{max}}$ are desirable for reducing interference when the number of users is greater than the signal space dimension \cite{DY2007}. Such signal sets could be used in future communications networks for increasing the user capacity within a limited frequency spectrum.

There are some lower bounds on $I_\textrm{rms}$ and $I_\textrm{max}$.  The Welch bounds \cite{Welch74}
\begin{equation}
I_\textrm{rms}\geq \sqrt{\frac{N-K}{(N-1)K}}
\end{equation}
\begin{equation}
I_{\textrm{max}}\geq \sqrt{\frac{N-K}{(N-1)K}}
\end{equation}
and the Levenstein bound \cite{KabLev78} for $N>K^2$
\begin{equation}\label{eqn:lev}
I_{\textrm{max}}\geq \sqrt{\frac{2N-K^2-K}{(K+1)(N-K)}}.
\end{equation}

It has been shown that a complete set of MUBs in $\mathbb{C}^K$, when considered as a $(K^2+K, K)$ signal set,  meets the rms Welch bound \cite{KR05}.  For $N>K^2$ the max Levenstein bound is tighter than the max Welch bound.

The construction of low correlation sequences and constructions of MUBs have continued in close tandem with  Ding and Yin \cite{DY2007} using planar functions to construct signal sets as well as MUBs at around the same time as Roy and Scott \cite{RS07}, the later using the construction of MUBs in the context of quantum physics.

\begin{thm}\cite[Thm 4]{DY2007}
Let $\Pi(x)$ be a planar function on $\mathbb{F}_q$.  Let
\begin{equation}
\vec{c}_{ab}= \frac{1}{\sqrt{q}}\left(\omega_p^{\text{\emph{tr}}(a\Pi(x)+bx)}\right)_{x\in\mathbb{F}_q}.
\end{equation}
Let $C_\Pi=\{\vec{c}_{ab}:a,b\in\mathbb{F}_q\}\cup E$.  Then $C_\Pi$ is a $(q^2+q,q)$ signal set with $I_{\emph{\textrm{max}}}=\frac{1}{\sqrt{q}}$.
\end{thm}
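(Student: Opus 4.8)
The plan is to verify the parameters of $C_\Pi$ and then evaluate all of its off-diagonal correlations, showing each has modulus at most $1/\sqrt{q}$ with the value $1/\sqrt{q}$ attained. For the parameters: there are $q^{2}$ index pairs $(a,b)\in\mathbb{F}_q\times\mathbb{F}_q$ together with the $q$ standard basis vectors, all lying in $\mathbb{C}^q$, so $|C_\Pi|=q^{2}+q$; every entry of $\vec{c}_{ab}$ has modulus $1/\sqrt{q}$, so each $\vec{c}_{ab}$ is a unit vector, as are the members of $E$. That the $q^{2}+q$ vectors are pairwise distinct will fall out of the correlation computation below (no off-diagonal inner product will have modulus $1$, since $q\geq 3$), so $C_\Pi$ is genuinely a $(q^{2}+q,q)$ signal set.

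For the correlations I would split into three cases. First, two vectors $\vec{c}_{ab}$, $\vec{c}_{cd}$ with $(a,b)\neq(c,d)$: expanding the inner product gives
\[
\langle\vec{c}_{ab}|\vec{c}_{cd}\rangle=\frac{1}{q}\sum_{x\in\mathbb{F}_q}\chi\bigl((a-c)\Pi(x)+(b-d)x\bigr).
\]
If $a\neq c$ then $a-c\in\mathbb{F}_q^{*}$, so Theorem~\ref{thm:planarsum} (applied with nonzero coefficient $a-c$ and coefficient $b-d$) gives that the sum has modulus $\sqrt{q}$, whence $|\langle\vec{c}_{ab}|\vec{c}_{cd}\rangle|=1/\sqrt{q}$. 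If $a=c$ (so $b\neq d$) the nonlinear term cancels and the sum reduces to $\frac{1}{q}\sum_{x}\chi((b-d)x)$, which is $0$ since $x\mapsto\chi((b-d)x)$ is a nontrivial additive character of $\mathbb{F}_q$ (cf.\ \cite[Thm 5.4]{LN97}). Second, a vector $\vec{c}_{ab}$ against a standard basis vector: the inner product is a single entry of $\vec{c}_{ab}$, of modulus $1/\sqrt{q}$. Third, two distinct standard basis vectors: the inner product is $0$.

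Combining the cases, the modulus of the inner product of any two distinct members of $C_\Pi$ is at most $1/\sqrt{q}$, and the value $1/\sqrt{q}$ is realized (for instance by $\vec{c}_{ab}$ against any standard basis vector, or by any pair $\vec{c}_{ab},\vec{c}_{cd}$ with $a\neq c$), so $I_{\textrm{max}}(C_\Pi)=1/\sqrt{q}$. I do not anticipate a genuine obstacle: the only substantive ingredient is the planar character-sum estimate of Theorem~\ref{thm:planarsum}, and the remainder is bookkeeping over the three cases. The two points that warrant a moment's care are the degenerate subcase $a=c$, $b\neq d$ (where one must observe that the $\Pi$-term disappears and the surviving additive-character sum vanishes) and the closing step (where one must exhibit a pair attaining $1/\sqrt{q}$, so that the conclusion is an equality rather than merely an upper bound). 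Equivalently, one could remark that $C_\Pi$ is precisely the signal set obtained from the complete set of $q+1$ MUBs of Theorem~\ref{royscott}, and invoke the general fact that such a set, regarded as a $(q^{2}+q,q)$ signal set, has $I_{\textrm{max}}=1/\sqrt{q}$ and meets the Levenstein bound~(\ref{eqn:lev}).
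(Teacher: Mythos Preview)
Your proof is correct. The paper itself does not supply a proof for this theorem---it is quoted as a citation from \cite{DY2007}---so there is nothing to compare your argument against line by line. Your direct computation via Theorem~\ref{thm:planarsum} and the additive-character orthogonality relation is sound, and your three-case split covers everything.

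It is worth noting that your closing alternative is exactly the perspective the paper takes: the vectors $\vec{c}_{ab}$ here coincide with the vectors $\vec{v}_{ab}$ of Theorem~\ref{royscott}, so the present statement is simply the signal-set reformulation of that MUBs result. The paper's surrounding discussion and Lemma~\ref{lem:levbound} then record that such a complete set of MUBs, viewed as a $(q^2+q,q)$ signal set, has $I_{\textrm{max}}=1/\sqrt{q}$ and meets the Levenstein bound. So your detailed character-sum argument is a welcome expansion of what the paper leaves to the cited reference.
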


It has been noted that the signal sets from the planar function construction are optimal with respect to the Levenstein bound \cite{DY2007}: this is true in general for signal sets which are MUBs.
\begin{lem}\label{lem:levbound}
A complete set of MUBs in $\mathbb{C}^K$ when considered as a $(K^2+K,K)$ signal set meets the maximum Levenstein Bound.
\end{lem}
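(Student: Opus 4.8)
The plan is to evaluate both sides of the Levenstein bound~(\ref{eqn:lev}) for the parameters arising from a complete set of MUBs and to check that they coincide, so that the bound is met with equality.

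First I would record the combinatorial parameters. A complete set of MUBs in $\mathbb{C}^K$ consists of $K+1$ pairwise unbiased orthonormal bases, each containing $K$ vectors, so the union, viewed as a signal set $C$, has $N=K(K+1)=K^2+K$ unit vectors. In particular $N=K^2+K>K^2$ for every $K\geq 1$, so the Levenstein bound~(\ref{eqn:lev}) is the relevant bound here.

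Next I would compute $I_{\textrm{max}}(C)$. For two distinct vectors $\vec{v}_i,\vec{v}_j$ lying in the same basis, orthonormality gives $\langle\vec{v}_i|\vec{v}_j\rangle=0$; for two vectors lying in different bases, the unbiasedness condition gives $|\langle\vec{v}_i|\vec{v}_j\rangle|=1/\sqrt{K}$. Since $K+1\geq 2$ there is at least one cross-basis pair, hence $I_{\textrm{max}}(C)=1/\sqrt{K}$.

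Finally I would substitute $N=K^2+K$ into the right-hand side of~(\ref{eqn:lev}): the numerator becomes $2(K^2+K)-K^2-K=K(K+1)$ and the denominator becomes $(K+1)(K^2+K-K)=(K+1)K^2$, so
\[
\sqrt{\frac{2N-K^2-K}{(K+1)(N-K)}}=\sqrt{\frac{K(K+1)}{(K+1)K^2}}=\sqrt{\frac{1}{K}}=\frac{1}{\sqrt{K}}=I_{\textrm{max}}(C),
\]
and the bound is attained. I do not expect any real obstacle: the argument reduces to this one-line simplification, and the only points needing a moment's attention are confirming $N>K^2$ (so that~(\ref{eqn:lev}) applies) and observing that the maximum over pairs is realized by the cross-basis pairs, the same-basis pairs contributing $0$.
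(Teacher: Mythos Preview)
Your proposal is correct and follows the same approach as the paper: substitute $N=K^2+K$ into the right-hand side of the Levenstein bound and simplify to $1/\sqrt{K}$. You are in fact more thorough than the paper, which records only the one-line simplification and leaves the verification that $I_{\textrm{max}}(C)=1/\sqrt{K}$ and that $N>K^2$ implicit.
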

\begin{proof}
\begin{equation}
\sqrt{\frac{2(K^2+K)-K^2-K}{(K+1)(K^2+K-K)}}= \sqrt{\frac{1}{K}}
\end{equation}
\end{proof}
Thus signal sets which correspond to MUBs are optimal in both rms and max correlation amplitudes.  In particular, signal sets generated from the Alltop construction of MUBs (Theorem~\ref{alltopMUBs}) are also  optimal signal sets.
\begin{cor}
Let $A(x)$ be an Alltop type function on $\mathbb{F}_q$.  Let
\begin{equation}
\vec{v}_{ab}= \frac{1}{\sqrt{q}}\left(\omega_p^{\text{\emph{tr}}(A(x+a)+b(x+a)}\right)_{x\in\mathbb{F}_q}.
\end{equation}
Let $C_A=\{\vec{v}_{ab}:a,b\in\mathbb{F}_q\}\cup E$.  Then $C_A$ is a $(q^2+q,q)$ signal set with $I_{\emph{\textrm{max}}}=\frac{1}{\sqrt{q}}$
\end{cor}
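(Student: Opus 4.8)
The plan is to read off the corollary from Theorem~\ref{alltopMUBs} together with the parameter bookkeeping already used in Lemma~\ref{lem:levbound}. First I would observe that $C_A$ is exactly the union of the complete set of mutually unbiased bases produced in Theorem~\ref{alltopMUBs}: the standard basis $E$ contributes $q$ vectors, and for each of the $q$ values $a\in\mathbb{F}_q$ the set $V_a=\{\vec v_{ab}:b\in\mathbb{F}_q\}$ contributes a further $q$ vectors, so $C_A$ consists of $q+q\cdot q=q^2+q$ unit vectors in $\mathbb{C}^q$. Hence $C_A$ is an $(N,K)$ signal set with $N=q^2+q$ and $K=q$, provided these $q^2+q$ vectors are pairwise distinct. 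Distinctness within a single basis is immediate from orthonormality, and distinctness across two different bases of the MUB family follows from the unbiasedness relation $|\langle\vec x|\vec y\rangle|=1/\sqrt q\neq 1$, which rules out $\vec x=\vec y$; so the count is exactly $q^2+q$.

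Next I would compute $I_{\textrm{max}}(C_A)$ straight from the definition. Take distinct $\vec v_i,\vec v_j\in C_A$. If they lie in the same basis (both in $E$, or both in some $V_a$) then $\langle\vec v_i|\vec v_j\rangle=0$ by orthonormality; if they lie in two different bases of the family, then $|\langle\vec v_i|\vec v_j\rangle|=1/\sqrt q$ by the defining property of MUBs established in Theorem~\ref{alltopMUBs} (for the $V_a$-versus-$V_c$ case this came from Theorem~\ref{thm:planarsum} applied to the planar function $\Delta_{A,a-c}$, and for the $E$-versus-$V_a$ case from the fact that every coordinate of $\vec v_{ab}$ has modulus $1/\sqrt q$). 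Since $q+1\geq 2$ there are at least two bases, so the value $1/\sqrt q$ is actually attained, and therefore $I_{\textrm{max}}(C_A)=\max_{i<j}|\langle\vec v_i|\vec v_j\rangle|=1/\sqrt q$.

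Finally I would record the optimality remark that motivates the corollary: with $K=q$ we have $N=q^2+q>q^2=K^2$, so the Levenstein bound~(\ref{eqn:lev}) is in force, and the arithmetic in Lemma~\ref{lem:levbound} shows its right-hand side equals $\sqrt{1/K}=1/\sqrt q$, matching $I_{\textrm{max}}(C_A)$; hence $C_A$ is optimal with respect to the maximum Levenstein bound (and, being a complete set of MUBs, optimal for $I_{\textrm{rms}}$ as well). I do not expect a genuine obstacle here, since the statement is essentially a repackaging of Theorem~\ref{alltopMUBs}; the only point deserving an explicit line is the verification that the $q^2+q$ listed vectors are pairwise distinct, so that the parameters $(q^2+q,q)$ are attained exactly rather than merely as bounds.
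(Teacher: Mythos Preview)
Your proposal is correct and follows exactly the paper's intended route: the corollary is stated without proof in the paper, as an immediate consequence of Theorem~\ref{alltopMUBs} (the Alltop construction yields a complete set of MUBs) together with Lemma~\ref{lem:levbound} (complete MUBs, viewed as $(K^2+K,K)$ signal sets, achieve $I_{\max}=1/\sqrt{K}$). Your write-up simply makes explicit the vector count, the distinctness check, and the inner-product casework that the paper leaves implicit.
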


\section{Conclusion}

The construction of MUBs and signal sets with optimal correlation, using a cubic function, has been known  for some time \cite{Alltop80, KR03}.  Along with a new family of, what we call, Alltop functions, we have expanded on this idea by showing that MUBs and optimal signal sets can be constructed by any Alltop function.  These applications of Alltop functions motivate further searches for Alltop functions.

There is an open problem regarding the possibility of non-equivalent Alltop functions generating non-equivalent MUBs.  Another open problem is analyzing the properties of Alltop signals with regards to other correlation measures and transmission methods.


%



\section*{Acknowledgment}
The authors would like to thank  Diane Donovan for useful discussions and  Cunshen Ding for highlighting reference \cite{DY2007}, and the anonymous reviewers whose suggestions greatly improved the readability of this paper, and in particular for providing a shorter proof for Lemma \ref{lem:alltopfact}.

\end{document}